\tikzset{ext/.style={circle, draw,inner sep=1pt},int/.style={circle,draw,fill,inner sep=1.4pt},nil/.style={inner sep=1pt},dot/.style={circle,fill,inner sep=0pt,outer sep=0pt}}
\tikzset{cy/.style={circle,draw,fill,inner sep=2pt},scy/.style={circle,draw,inner sep=2pt},scyx/.style={draw,cross out,inner sep=2pt},scyt/.style={draw,regular polygon,regular polygon sides=3,inner sep=0.95pt}}
\tikzset{exte/.style={circle, draw,inner sep=3pt},inte/.style={circle,draw,fill,inner sep=3pt}}
\tikzset{diagram/.style={matrix of math nodes, row sep=3em, column sep=2.5em, text height=1.5ex, text depth=0.25ex}}
\tikzset{diagram2/.style={matrix of math nodes, row sep=0.5em, column sep=0.5em, text height=1.5ex, text depth=0.25ex}}
\tikzset{
  rightblue/.style={
    decoration={markings,mark=at position .8 with {\arrow[scale=1.2,blue]{latex}}},
    postaction={decorate},begin
    shorten >=0.4pt}}
\tikzset{
  leftblue/.style={
    decoration={markings,mark=at position .6 with {\arrowreversed[scale=1.2,blue]{latex}}},
    postaction={decorate},
    shorten >=0.4pt}}
\tikzset{
  rightred/.style={
    decoration={markings,mark=at position .4 with {\arrow[scale=1.2,red]{latex}}},
    postaction={decorate},
    shorten >=0.4pt}}
\tikzset{
  leftred/.style={
    decoration={markings,mark=at position .2 with {\arrowreversed[scale=1.2,red]{latex}}},
    postaction={decorate},
    shorten >=0.4pt}}
\tikzset{
  rightblue/.style={
    decoration={markings,mark=at position .8 with {\arrow[scale=1.2,blue]{latex}}},
    postaction={decorate},
    shorten >=0.4pt}}
\tikzset{
  leftblue/.style={
    decoration={markings,mark=at position .6 with {\arrowreversed[scale=1.2,blue]{latex}}},
    postaction={decorate},
    shorten >=0.4pt}}
\tikzset{
  rightred/.style={
    decoration={markings,mark=at position .4 with {\arrow[scale=1.2,red]{latex}}},
    postaction={decorate},
    shorten >=0.4pt}}
\tikzset{
  leftred/.style={
    decoration={markings,mark=at position .2 with {\arrowreversed[scale=1.2,red]{latex}}},
    postaction={decorate},
    shorten >=0.4pt}}
\tikzset{
  crossed/.style={
    decoration={markings,mark=at position .5 with {\arrow{|}}},
    postaction={decorate},
    shorten >=0.4pt}}
\tikzset{snakeit/.style={decorate, decoration={snake, amplitude=.2mm,segment length=1mm}}}
\newcommand{\Ed}{{
\begin{tikzpicture}[baseline=-.8ex,scale=.5]
\node[nil] (a) at (0,0) {};
\node[nil] (b) at (1,0) {};
\draw (a) edge[-latex] (b);
\end{tikzpicture}}}
\newcommand{\dE}{{
\begin{tikzpicture}[baseline=-.8ex,scale=.5]
\node[nil] (a) at (0,0) {};
\node[nil] (b) at (1,0) {};
\draw (a) edge[latex-] (b);
\end{tikzpicture}}}
\newcommand{\EdE}{{
\begin{tikzpicture}[baseline=-.65ex,scale=.5]
 \node[nil] (a) at (0,0) {};
 \node[int] (b) at (1,0) {};
 \node[nil] (c) at (2,0) {};
 \draw (a) edge[-latex] (b);
 \draw (b) edge[latex-] (c);
\end{tikzpicture}}}
\newcommand{\dEd}{{
\begin{tikzpicture}[baseline=-.65ex,scale=.5]
 \node[nil] (a) at (0,0) {};
 \node[int] (b) at (1,0) {};
 \node[nil] (c) at (2,0) {};
 \draw (a) edge[latex-] (b);
 \draw (b) edge[-latex] (c);
\end{tikzpicture}}}
\newcommand{\Ess}{{
\begin{tikzpicture}[baseline=-.65ex,scale=.5]
 \node[nil] (a) at (0,0) {};
 \node[nil] (c) at (1.4,0) {};
 \draw (a) edge[crossed,->] (c);
 %\draw (.7,.15) edge (.7,-.15);
\end{tikzpicture}}}
\newcommand{\EE}{{
\begin{tikzpicture}[baseline=-.65ex,scale=.5]
 \node[nil] (a) at (0,0) {};
 \node[nil] (c) at (1,0) {};
 \draw (a) edge[very thick,snakeit] (c);
\end{tikzpicture}}}
\newcommand{\EEpassing}{{
\begin{tikzpicture}[baseline=-.65ex,scale=.5]
 \node[nil] (a) at (0,0) {};
 \node[int] (b) at (1,0) {};
 \node[nil] (c) at (2,0) {};
 \draw (a) edge[-latex] (b);
 \draw (b) edge[-latex] (c);
\end{tikzpicture}}}
\theoremstyle{plain}
  \newtheorem{thm}[figure]{Theorem}
  \newtheorem*{thm*}{Theorem}
  \newtheorem{prop}[figure]{Proposition}
  \newtheorem{cor}[figure]{Corollary}
  \newtheorem{lemma}[figure]{Lemma}
\theoremstyle{definition}
\newcommand{\R}{{\mathbb{R}}}
\newcommand{\K}{{\mathbb{K}}}
\newcommand{\Z}{{\mathbb{Z}}}
\newcommand{\sym}{{\mathbb{S}}}
\newcommand{\grt}{\mathfrak{grt}}
\newcommand{\GC}{\mathsf{GC}}
\newcommand{\OGC}{\mathsf{OGC}}
\newcommand{\G}{\mathsf{G}}
\newcommand{\OG}{\mathsf{OG}}
\newcommand{\mV}{\mathsf{V}}
\newcommand{\mE}{\mathsf{E}}
\newcommand{\mO}{\mathsf{O}}
\DeclareMathOperator{\sgn}{sgn}
\DeclareMathOperator{\id}{id}
\DeclareMathOperator{\Def}{Def}
\newcommand{\grac}{\mathsf{grac}}
\newcommand{\bu}{{\bullet}}
\def\id{{\mbox{1 \hskip -7pt 1}}}
 \newcommand{\lon}{\longrightarrow}
 \newcommand{\lonm}{\longmapsto}
 \newcommand{\rar}{\rightarrow}
\newcommand{\p}{{\partial}}
\newcommand{\Id}{{\mathrm{Id}}}
\newcommand{\Der}{\mathrm{Der}}
 \newcommand{\bS}{{\mathbb S}}
 \newcommand{\ot}{\otimes}
\newcommand{\st}{\stackrel{\leftrightarrow}}
\newcommand{\dGC}{\mathsf{dGC}}
 \newcommand{\Beq}{\begin{equation}}
 \newcommand{\Eeq}{\end{equation}}
 \newcommand{\Beqr}{\begin{eqnarray}}
 \newcommand{\Eeqr}{\end{eqnarray}}
 \newcommand{\Beqrn}{\begin{eqnarray*}}
 \newcommand{\Eeqrn}{\end{eqnarray*}}
 \newcommand{\Ba}{\begin{array}}
 \newcommand{\Ea}{\end{array}}
 \newcommand{\Bi}{\begin{itemize}}
 \newcommand{\Ei}{\end{itemize}}
 \newcommand{\Bc}{\begin{center}}
 \newcommand{\Ec}{\end{center}}
 \newcommand{\cA}{{\mathcal A}}
 \newcommand{\cB}{{\mathcal B}}
 \newcommand{\caD}{{\mathcal D}}
 \newcommand{\cE}{{\mathcal E}}
 \newcommand{\cF}{{\mathcal F}}
 \newcommand{\cP}{{\mathcal P}}
 \newcommand{\cQ}{{\mathcal Q}}
 \newcommand{\cR}{{\mathcal R}}
 \newcommand{\cU}{{\mathcal U}}
 \newcommand{\Ga}{\Gamma}
 \newcommand{\sip}{\smallskip}
 \newcommand{\bip}{\bigskip}
\newcommand{\LB}{\mathcal{L}\mathit{ieb}}
\newcommand{\LBd}{\mathcal{L}\mathit{ieb}_{d}}
\newcommand{\HoLBd}{\mathcal{H}\mathit{olieb}_{d}}
\newcommand{\wHoLBd}{\widehat{\mathcal{H}\mathit{olieb}}_{d}}
\newcommand{\HoLB}{\mathcal{H}\mathit{olieb}}
\newcommand{\Assb}{\mathcal{A}\mathit{ssb}}
\begin{document}
\title{Quantizations of Lie bialgebras, duality involution and\\
% Grothendieck-Teichm\"uller group
oriented graph complexes}

\author{Sergei Merkulov and Marko \v Zivkovi\' c}
\address{Department Mathematics, University of Luxembourg\\
Maison du Nombre, 6, avenue de la Fonte, L-4364 Esch-sur-Alzette\\
Grand Duchy of Luxembourg}

%\subjclass[2000]{16E45; 53D55; 53C15; 18G55}

\keywords{Lie bialgebras, deformation quantization, graph complexes}

\begin{abstract}  We prove that the action of the Grothendieck-Teichm\"uller group on the genus completed properad of (homotopy) Lie bialgebras commutes with the reversing directions involution  of the latter. We  also prove that  every universal quantization of Lie bialgebras is homotopy equivalent to the one which commutes with the duality involution exchanging Lie bracket and Lie cobracket. The proofs are based on a new result in the theory of oriented graph complexes (which can be of independent interest) saying that the involution on an oriented graph complex that changes all directions on edges induces the identity map on its cohomology.
\end{abstract}

\maketitle

\section{Introduction} Quantum groups originated in the theory of quantum integrable systems, and  found many application in pure mathematics and mathematical physics. They have been identified by Vladimir Drinfeld and Michio Jimbo with a certain class of Hopf algebras. One of the most famous examples of Hopf algebras is given by the universal enveloping  algebra $\cU(V)$ associated to a Lie algebra $V$; it can be understood as a deformation of the standard commutative and co-commutative Hopf algebra structure in the symmetric tensor algebra $\odot^\bu V$ which is controlled by the given Lie algebra structure in $V$. Generic deformations of that standard Hopf algebra structure in $\odot^\bu V$ are controlled by a more informative datum, by a so called {\em Lie bialgebra}\, structure in $V$  which is based on two compatible Lie and co-Lie operations \cite{Dr1},
$$
\Ba{c}\resizebox{6mm}{!}{  \xy
(0,-7)*{_{1\hspace{5mm} 2}},
(0,5)*{}="U1";
(0,0)*{\bu}="o";
(-3,-5)*{}="D1";
(3,-5)*{}="D2";
\ar @{->} "D1";"o" <0pt>
\ar @{->} "D2";"o" <0pt>
\ar @{<-} "U1";"o" <0pt>
 \endxy}
 \Ea  \Leftrightarrow [\ ,\ ]: \Lambda^2 V \rar V, \ \ \ \ \ \
 \Ba{c}\resizebox{6mm}{!}{  \xy
(0,7)*{_{1\hspace{5mm} 2}},
(0,-5)*{}="U1";
(0,0)*{\bu}="o";
(-3,5)*{}="D1";
(3,5)*{}="D2";
\ar @{<-} "D1";"o" <0pt>
\ar @{<-} "D2";"o" <0pt>
\ar @{->} "U1";"o" <0pt>
 \endxy}
 \Ea \Leftrightarrow \triangle: V \rar \Lambda^2 V
$$
which in terms of properads get represented by directed graphs with two (resp. one) inputs and one (resp.\ two) outputs. If $V$ is a Lie algebra, then its dual\footnote{We work in the category of possibly infinite-dimensional vector spaces $V$ which are direct limits, $\displaystyle V=\lim_{\lon} V_p$, of finite-dimensional ones. Their duals are defined as projective limits, $\displaystyle V^*=\lim_{\longleftarrow} V^*_p$.} vector space $V^*$ is also a Lie bialgebra. This duality transformation looks most nicely when expressed in the language of prop(erad)s --- all the relations defining the notion of the properad $\LB$ controlling Lie bialgebras  are invariant under the involution which reverses directions on all edges simultaneously,
\Beq\label{1: involution of Lie and coLie}
\imath: \Ba{c}\resizebox{6mm}{!}{  \xy
(0,-7)*{_{1\hspace{5mm} 2}},
(0,5)*{}="U1";
(0,0)*{\bu}="o";
(-3,-5)*{}="D1";
(3,-5)*{}="D2";
\ar @{->} "D1";"o" <0pt>
\ar @{->} "D2";"o" <0pt>
\ar @{<-} "U1";"o" <0pt>
 \endxy}
 \Ea \lon \Ba{c}\resizebox{6mm}{!}{  \xy
(0,7)*{_{1\hspace{5mm} 2}},
(0,-5)*{}="U1";
(0,0)*{\bu}="o";
(-3,5)*{}="D1";
(3,5)*{}="D2";
\ar @{<-} "D1";"o" <0pt>
\ar @{<-} "D2";"o" <0pt>
\ar @{->} "U1";"o" <0pt>
 \endxy}
 \Ea, \ \ \ \
 \Ba{c}\resizebox{6mm}{!}{  \xy
(0,7)*{_{1\hspace{5mm} 2}},
(0,-5)*{}="U1";
(0,0)*{\bu}="o";
(-3,5)*{}="D1";
(3,5)*{}="D2";
\ar @{<-} "D1";"o" <0pt>
\ar @{<-} "D2";"o" <0pt>
\ar @{->} "U1";"o" <0pt>
 \endxy}
 \Ea
 \lon
 \Ba{c}\resizebox{6mm}{!}{  \xy
(0,-7)*{_{1\hspace{5mm} 2}},
(0,5)*{}="U1";
(0,0)*{\bu}="o";
(-3,-5)*{}="D1";
(3,-5)*{}="D2";
\ar @{->} "D1";"o" <0pt>
\ar @{->} "D2";"o" <0pt>
\ar @{<-} "U1";"o" <0pt>
 \endxy}
 \Ea
 \Eeq
We address in this paper two questions concerning this involution.

\sip

{\sf Question 1}: It is proven in \cite{MW1} that the mysterious
Grothendieck-Teichm\"uller group $GRT$ acts faithfully (and essentially transitively) on the  genus completion of the properad $\LB$ of Lie bialgebras. Does this action commute with the above direction (or duality) involution $\imath$? We prove that the answer to this question is {\sf yes}: it is enough to know the action of an element of  $GRT$ on just Lie (or coLie) generator of $\LB$, the value on the second one (up to homotopy equivalence) follows from the first one by reversing all arrows (see \S 2 and \S 4  for a precise formulation which takes care about orientations and signs). The same answer holds true when one considers the action of $GRT$ on the genus completion of the minimal resolution $\HoLB$ of the properad $\LB$.

\sip

We conjecture, however, that the above answer to Question 1 is negative if one replaces $\HoLB$ (or $\LB$) with its wheeled closure, $\HoLB^\circlearrowright$ (or $\LB^\circlearrowright$), which controls homotopy Lie bialgebra structures solely in {\em finite}-dimensional spaces (cf.\ \cite{Me}).

\sip

{\sf Question 2}: According to the famous theorem of Etingof-Kazhdan \cite{EK}, any Lie bialgebra structure in a vector space $V$ can be deformation quantized, that is, there is a continuous  Hopf algebra structure in  $\odot^\bu V[[\hbar]]$, $\hbar$ being a formal parameter, which --- at the infinitesimal in $\hbar$ level --- induces the given Lie bialgebra in $V$.
Moreover, there is a deformation quantization procedure which is universal in the sense that it is applicable to any (possibly, infinite-dimensional) Lie bialgebra $V$. Such a universal quantization is best understood as a morphism props \cite{MW2},
$$
\cQ: \Assb \lon \caD(\widehat{\LB})
$$
where $\Assb$ is the prop of associative bialgebras, and $\caD$ is a polydifferential endofunctor \cite{MW2} in the category of dg props applied to the genus completed prop(erad) $\widehat{\LB}$
of Lie bialgebras. It was proven in \cite{MW2} that such morphisms (up to homotopy, or gauge equivalence) are classified by the set of V.\ Drinfeld associators. Both sides of the above arrow admit a canonical direction involution as the relations defining associative bialgebras are also invariant under the direction (or duality) involution, in a full analogy to the case of Lie bialgebras. Are there universal quantizations of Lie bialgebras $\cQ$ which commute with the direction involutions of both sides, of $\cA ss\cB$ and  $\LB$? It was proven in \cite{KT}  that the answer is {\sf yes}: there exists at least one such {\em direction involution invariant}\, universal quantization. We classify (up to homotopy equivalence) in this paper all the direction involution invariant universal quantizations and prove that the set of all such equivalence classes  can identified with the set of V.\ Drinfeld associators. This result implies that {\em any universal quantization of (possibly, infinite-dimensional) Lie bialgebras is homotopy equivalent to a direction involution invariant one}. Thus, up to a homotopy equivalence, one can always make the formula for the deformed associative co-multiplication in $\odot^\bu V[\hbar]]$  to be simply the dual of the formula for the deformed multiplication in $\odot^\bu V^*[[\hbar]]$ corresponding to the quantization of the dual  Lie bialgebra (one must be careful about orientations and signs
as explained below). We conjecture that this statement is not true when applied to universal quantizations of solely {\em finite}-dimensional Lie bialgebras, i.e.\ to the universal quantizations of the form
$$
\cQ^\circlearrowright: \Assb^\circlearrowright \lon \caD(\widehat{\LB^{\circlearrowright}}).
$$
where the superscipt $^\circlearrowright$ stands for the wheeled closure of ordinary props \cite{Me,MMS}.
%We thinks that in this case the formulae for the deformed multiplication and co-multiplication %can, in general, not to be dual to each other, and the difference can not be fixed by a %suitable gauge transformation.

\sip

The above results are established with the help of a new fundamental fact about {\it oriented}\, graph complexes which is of independent interest. M.\ Kontsevich graph complexes\footnote{Graph complexes are often assumed to be generated by at least trivalent graphs. The superscript $\geq 2$ in the notation $\GC_d^{\geq 2}$ means that allow graphs with at least bivalent vertices. The difference between these two versions of M.Konstevich's graph complexes is well understood --- it is measured by a countable family of polytope-like graphs whose every vertex is precisely bivalent \cite{Wi1}.} $\GC_d^{\geq 2}$ come in a family parameterized by an integer $d\in \Z$ \cite{Kont3,Wi1}; complexes with the same parity of $d$ are isomorphic to each other (up to degree shift). The dg Lie algebra $\GC_2^{\geq 2}$ is of special interest as its zero-th cohomology group can be identified \cite{Wi1} with the Lie algebra,
$$
H^0(\GC_2^{\geq 2})=\grt_1,
$$
 of the famous prounipotent Grothendieck-Teichm\"uller group $GRT_1$ introduced in \cite{Dr2}. The graphs from $\GC_d^{\geq 2}$ have no directions fixed on its edges; the version $\dGC_d^{\geq 2}$ of $\GC_d^{\geq 2}$ in which that directions are fixed gives essentially nothing new as the natural map
 $$
 \GC_d^{\geq 2} \lon \dGC_d^{\geq 2}
 $$
which sends an undirected graph from the l.h.s.\ into a sum of directed graphs by choosing a direction on each edge in both possible ways, is a quasi-isomorphism \cite{Wi1}. Remarkably, the complex
$\dGC_d^{\geq 2}$ contains a subcomplex $\OGC_d$ spanned by graphs with {\it no closed paths of directed edges}, and there is an isomorphism of cohomology groups,
$$
H^\bu(\GC_d^{\geq 2})=H^\bu(\OGC_{d+1}), \ \ \  \forall\ d\in \Z,
$$
so that the Grothendieck-Teichm\"uller Lie algebra gets a new incarnation in dimension $d=3$,
$$
H^0(\OGC_3)=\grt_1.
$$
The oriented graph complexe $\OGC_d$ admits an involution $\imath: \OGC_d \rar \OGC_d$ which reverses directions on all edges  and hence decomposes into a direct sum
$$
\OGC_d=\OGC_d^+ \oplus \OGC_d^-,
$$
corresponding to eigenvalues $+1$ and $-1$ of the automorphism $\imath$. It is proven in this paper that {\it the subcomplex $\OGC_d^-$ is acyclic}\, implying, in particular, that
every element of the Grothendieck-Teichm\"uller Lie algebra $\grt_1$ can be represented by a linear combination of graphs which are all {\it invariant}\, under the reversing direction involution,
$$
H^0(\OGC_3^+)=\grt_1.
$$
This result is the origin of the above claims about  about universal deformation quantizations and automorphism groups of Lie bialgebras.

\subsection{Structure of the paper} In section \S 2 we remind the notion of (degree $d$ shifted) Lie bialgebras and of the properads, $\LB_d$ and $\HoLBd$, controlling their homotopy theory; we define an involution $\imath$ on $\HoLBd$ (and on its cohomology $\LBd$) which reverses directions on all edges. In Section \S 3 we recall the definition of oriented graph complex $\mO\G_{d}$, the dual of the above mentioned graph complex $\OGC_d$, and its skeleton version; we also introduce a direction reversing involution $\iota$ of $\mO\G_d$ and prove that its
eigenvalue $-1$ subcomplex is acyclic which implies the above claim about acyclicity of $\OGC_d^-$. In \S 3 we prove the answer {\sf yes} to the {\sf Question 1}, and in \S 4 we classify all direction involution invariant universal quantizations of Lie bialgebras, and prove that any universal quantization is homotopy equivalent to a direction involution invariant one.

\subsection{Notation}
 The set $\{1,2, \ldots, n\}$ is abbreviated to $[n]$;  its group of automorphisms is
denoted by $\bS_n$;
the trivial one-dimensional representation of
 $\bS_n$ is denoted by $\id_n$, while its one dimensional sign representation is
 denoted by $\sgn_n$.
The cardinality of a finite set $I$ is denoted by $\# I$.
We work throughout in the category of differential $\Z$-graded (dg, for short) vector spaces over a field $\K$
of characteristic zero; all our differentials have degree +1.
If $V=\oplus_{i\in \Z} V^i$ is a graded vector space, then
$V[k]$ stands for the graded vector space with $V[k]^i:=V^{i+k}$; for $v\in V^i$ we set $|v|:=i$. For a finite set $S$ we denote by $\K S$ its linear span over a field $\K$.

\bip

{\Large
\section{\bf Direction involution on properad of Lie bialgebras}
}

\subsection{Properad of Lie bialgebras} A degree $d\in \Z$ shifted Lie bialgebra structure in a graded vector space   is, by definition, a pair of linear maps
$$
[\ ,\ ]: \left\{ \Ba{cc} \Lambda^2 V \rar V[1-d] & \text{if $d$ is odd} \\
                          \odot^2 V \rar V[1-d] & \text{if $d$ is even} \Ea  \right. ,  \  \ \ \
\Delta: \left\{ \Ba{cc}  V \rar \Lambda^2 V[1-d] & \text{if $d$ is odd} \\
                          V \rar \odot^2 V[1-d]  & \text{if $d$ is even} \Ea \right.
$$
such that $[\ ,\ ]$ satisfies the Jacobi identity, $\Delta$ satisfies the co-Jacobi identity, and both operations satisfy the Drinfeld compatibility condition (which is best understood in terms of properads, see below). The case $d=1$ corresponds to the classical notion of a Lie bialgebra structure in $V$ which was introduced by Vladimir Drinfeld in the context of the deformation theory of the universal enveloping algebras in the category of quantum groups \cite{Dr1}.
The properad $\LBd$ governing degree $d$ Lie bialgebras is the quotient
$$
\LB_{d}:=\cF ree\langle E_0\rangle/\langle\cR\rangle,
$$
of the free prop(erad) generated by an  $\bS$-bimodule $E_0=\{E_0(m,n)\}_{m,n\geq 0}$ with
 all $E_0(m,n)=0$ except
  $$
E_0(2,1):=\id_1\ot (\sgn_2)^{\ot |d|}[d-1]=\mbox{span}\left\langle
\Ba{c}\resizebox{6mm}{!}{  \xy
(0,7)*{_{1\hspace{5mm} 2}},
(0,-5)*{}="U1";
(0,0)*{\bu}="o";
(-3,5)*{}="D1";
(3,5)*{}="D2";
\ar @{<-} "D1";"o" <0pt>
\ar @{<-} "D2";"o" <0pt>
\ar @{->} "U1";"o" <0pt>
 \endxy}
 \Ea
=(-1)^{d}
\Ba{c}\resizebox{6mm}{!}{  \xy
(0,7)*{_{2\hspace{5mm} 1}},
(0,-5)*{}="U1";
(0,0)*{\bu}="o";
(-3,5)*{}="D1";
(3,5)*{}="D2";
\ar @{<-} "D1";"o" <0pt>
\ar @{<-} "D2";"o" <0pt>
\ar @{->} "U1";"o" <0pt>
 \endxy}
 \Ea
   \right\rangle
$$
$$
E_0(1,2):= (\sgn_2)^{\ot |d|}\ot \id_1[d-1]=\mbox{span}\left\langle
\Ba{c}\resizebox{6mm}{!}{  \xy
(0,-7)*{_{1\hspace{5mm} 2}},
(0,5)*{}="U1";
(0,0)*{\bu}="o";
(-3,-5)*{}="D1";
(3,-5)*{}="D2";
\ar @{->} "D1";"o" <0pt>
\ar @{->} "D2";"o" <0pt>
\ar @{<-} "U1";"o" <0pt>
 \endxy}
 \Ea
=(-1)^{d}
\Ba{c}\resizebox{6mm}{!}{  \xy
(0,-7)*{_{2\hspace{5mm} 1}},
(0,5)*{}="U1";
(0,0)*{\bu}="o";
(-3,-5)*{}="D1";
(3,-5)*{}="D2";
\ar @{->} "D1";"o" <0pt>
\ar @{->} "D2";"o" <0pt>
\ar @{<-} "U1";"o" <0pt>
 \endxy}
 \Ea
\right\rangle
$$
by the ideal generated by the following relations
\Beq\label{3: R for LieB}
\cR:\left\{
\Ba{c}
\displaystyle
 \Ba{c}\resizebox{8.4mm}{!}{
\begin{xy}
 <0mm,0mm>*{\bu};<0mm,0mm>*{}**@{},
 <0mm,-0.49mm>*{};<0mm,-3.0mm>*{}**@{-},
 <0.49mm,0.49mm>*{};<1.9mm,1.9mm>*{}**@{-},
 <-0.5mm,0.5mm>*{};<-1.9mm,1.9mm>*{}**@{-},
 <-2.3mm,2.3mm>*{\bu};<-2.3mm,2.3mm>*{}**@{},
 <-1.8mm,2.8mm>*{};<0mm,4.9mm>*{}**@{-},
 <-2.8mm,2.9mm>*{};<-4.6mm,4.9mm>*{}**@{-},
   <0.49mm,0.49mm>*{};<2.7mm,2.3mm>*{^3}**@{},
   <-1.8mm,2.8mm>*{};<0.4mm,5.3mm>*{^2}**@{},
   <-2.8mm,2.9mm>*{};<-5.1mm,5.3mm>*{^1}**@{},
 \end{xy}}\Ea
 +\hspace{-2mm}
  \Ba{c}\resizebox{8.4mm}{!}{
\begin{xy}
 <0mm,0mm>*{\bu};<0mm,0mm>*{}**@{},
 <0mm,-0.49mm>*{};<0mm,-3.0mm>*{}**@{-},
 <0.49mm,0.49mm>*{};<1.9mm,1.9mm>*{}**@{-},
 <-0.5mm,0.5mm>*{};<-1.9mm,1.9mm>*{}**@{-},
 <-2.3mm,2.3mm>*{\bu};<-2.3mm,2.3mm>*{}**@{},
 <-1.8mm,2.8mm>*{};<0mm,4.9mm>*{}**@{-},
 <-2.8mm,2.9mm>*{};<-4.6mm,4.9mm>*{}**@{-},
   <0.49mm,0.49mm>*{};<2.7mm,2.3mm>*{^2}**@{},
   <-1.8mm,2.8mm>*{};<0.4mm,5.3mm>*{^1}**@{},
   <-2.8mm,2.9mm>*{};<-5.1mm,5.3mm>*{^3}**@{},
 \end{xy}}\Ea
 +\hspace{-2mm}
  \Ba{c}\resizebox{8.4mm}{!}{
\begin{xy}
 <0mm,0mm>*{\bu};<0mm,0mm>*{}**@{},
 <0mm,-0.49mm>*{};<0mm,-3.0mm>*{}**@{-},
 <0.49mm,0.49mm>*{};<1.9mm,1.9mm>*{}**@{-},
 <-0.5mm,0.5mm>*{};<-1.9mm,1.9mm>*{}**@{-},
 <-2.3mm,2.3mm>*{\bu};<-2.3mm,2.3mm>*{}**@{},
 <-1.8mm,2.8mm>*{};<0mm,4.9mm>*{}**@{-},
 <-2.8mm,2.9mm>*{};<-4.6mm,4.9mm>*{}**@{-},
   <0.49mm,0.49mm>*{};<2.7mm,2.3mm>*{^1}**@{},
   <-1.8mm,2.8mm>*{};<0.4mm,5.3mm>*{^3}**@{},
   <-2.8mm,2.9mm>*{};<-5.1mm,5.3mm>*{^2}**@{},
 \end{xy}}\Ea
 =0
 \ , \ \ \ \ \
%%%%%%%%%%%%%% Lie %%%%%%%%%%%%%%%%%%%%%%%%
\Ba{c}\resizebox{8.4mm}{!}{ \begin{xy}
 <0mm,0mm>*{\bu};<0mm,0mm>*{}**@{},
 <0mm,0.69mm>*{};<0mm,3.0mm>*{}**@{-},
 <0.39mm,-0.39mm>*{};<2.4mm,-2.4mm>*{}**@{-},
 <-0.35mm,-0.35mm>*{};<-1.9mm,-1.9mm>*{}**@{-},
 <-2.4mm,-2.4mm>*{\bu};<-2.4mm,-2.4mm>*{}**@{},
 <-2.0mm,-2.8mm>*{};<0mm,-4.9mm>*{}**@{-},
 <-2.8mm,-2.9mm>*{};<-4.7mm,-4.9mm>*{}**@{-},
    <0.39mm,-0.39mm>*{};<3.3mm,-4.0mm>*{^3}**@{},
    <-2.0mm,-2.8mm>*{};<0.5mm,-6.7mm>*{^2}**@{},
    <-2.8mm,-2.9mm>*{};<-5.2mm,-6.7mm>*{^1}**@{},
 \end{xy}}\Ea
 +\hspace{-2mm}
 \Ba{c}\resizebox{8.4mm}{!}{ \begin{xy}
 <0mm,0mm>*{\bu};<0mm,0mm>*{}**@{},
 <0mm,0.69mm>*{};<0mm,3.0mm>*{}**@{-},
 <0.39mm,-0.39mm>*{};<2.4mm,-2.4mm>*{}**@{-},
 <-0.35mm,-0.35mm>*{};<-1.9mm,-1.9mm>*{}**@{-},
 <-2.4mm,-2.4mm>*{\bu};<-2.4mm,-2.4mm>*{}**@{},
 <-2.0mm,-2.8mm>*{};<0mm,-4.9mm>*{}**@{-},
 <-2.8mm,-2.9mm>*{};<-4.7mm,-4.9mm>*{}**@{-},
    <0.39mm,-0.39mm>*{};<3.3mm,-4.0mm>*{^2}**@{},
    <-2.0mm,-2.8mm>*{};<0.5mm,-6.7mm>*{^1}**@{},
    <-2.8mm,-2.9mm>*{};<-5.2mm,-6.7mm>*{^3}**@{},
 \end{xy}}\Ea
 +\hspace{-2mm}
 \Ba{c}\resizebox{8.4mm}{!}{ \begin{xy}
 <0mm,0mm>*{\bu};<0mm,0mm>*{}**@{},
 <0mm,0.69mm>*{};<0mm,3.0mm>*{}**@{-},
 <0.39mm,-0.39mm>*{};<2.4mm,-2.4mm>*{}**@{-},
 <-0.35mm,-0.35mm>*{};<-1.9mm,-1.9mm>*{}**@{-},
 <-2.4mm,-2.4mm>*{\bu};<-2.4mm,-2.4mm>*{}**@{},
 <-2.0mm,-2.8mm>*{};<0mm,-4.9mm>*{}**@{-},
 <-2.8mm,-2.9mm>*{};<-4.7mm,-4.9mm>*{}**@{-},
    <0.39mm,-0.39mm>*{};<3.3mm,-4.0mm>*{^1}**@{},
    <-2.0mm,-2.8mm>*{};<0.5mm,-6.7mm>*{^3}**@{},
    <-2.8mm,-2.9mm>*{};<-5.2mm,-6.7mm>*{^2}**@{},
 \end{xy}}\Ea =0
\vspace{2mm} \\
%%%%%%%%%%%%%%%%%%%%%%% Lie[1]Bi %%%%%%%%%%%%%%%
 \Ba{c}\resizebox{6mm}{!}{\begin{xy}
 <0mm,2.47mm>*{};<0mm,0.12mm>*{}**@{-},
 <0.5mm,3.5mm>*{};<2.2mm,5.2mm>*{}**@{-},
 <-0.48mm,3.48mm>*{};<-2.2mm,5.2mm>*{}**@{-},
 <0mm,3mm>*{\bu};<0mm,3mm>*{}**@{},
  <0mm,-0.8mm>*{\bu};<0mm,-0.8mm>*{}**@{},
<-0.39mm,-1.2mm>*{};<-2.2mm,-3.5mm>*{}**@{-},
 <0.39mm,-1.2mm>*{};<2.2mm,-3.5mm>*{}**@{-},
     <0.5mm,3.5mm>*{};<2.8mm,5.7mm>*{^2}**@{},
     <-0.48mm,3.48mm>*{};<-2.8mm,5.7mm>*{^1}**@{},
   <0mm,-0.8mm>*{};<-2.7mm,-5.2mm>*{^1}**@{},
   <0mm,-0.8mm>*{};<2.7mm,-5.2mm>*{^2}**@{},
\end{xy}}\Ea
+(-1)^{d}
\Ba{c}\resizebox{7mm}{!}{\begin{xy}
 <0mm,-1.3mm>*{};<0mm,-3.5mm>*{}**@{-},
 <0.38mm,-0.2mm>*{};<2.0mm,2.0mm>*{}**@{-},
 <-0.38mm,-0.2mm>*{};<-2.2mm,2.2mm>*{}**@{-},
<0mm,-0.8mm>*{\bu};<0mm,0.8mm>*{}**@{},
 <2.4mm,2.4mm>*{\bu};<2.4mm,2.4mm>*{}**@{},
 <2.77mm,2.0mm>*{};<4.4mm,-0.8mm>*{}**@{-},
 <2.4mm,3mm>*{};<2.4mm,5.2mm>*{}**@{-},
     <0mm,-1.3mm>*{};<0mm,-5.3mm>*{^1}**@{},
     <2.5mm,2.3mm>*{};<5.1mm,-2.6mm>*{^2}**@{},
    <2.4mm,2.5mm>*{};<2.4mm,5.7mm>*{^2}**@{},
    <-0.38mm,-0.2mm>*{};<-2.8mm,2.5mm>*{^1}**@{},
    \end{xy}}\Ea
  +
\Ba{c}\resizebox{7mm}{!}{\begin{xy}
 <0mm,-1.3mm>*{};<0mm,-3.5mm>*{}**@{-},
 <0.38mm,-0.2mm>*{};<2.0mm,2.0mm>*{}**@{-},
 <-0.38mm,-0.2mm>*{};<-2.2mm,2.2mm>*{}**@{-},
<0mm,-0.8mm>*{\bu};<0mm,0.8mm>*{}**@{},
 <2.4mm,2.4mm>*{\bu};<2.4mm,2.4mm>*{}**@{},
 <2.77mm,2.0mm>*{};<4.4mm,-0.8mm>*{}**@{-},
 <2.4mm,3mm>*{};<2.4mm,5.2mm>*{}**@{-},
     <0mm,-1.3mm>*{};<0mm,-5.3mm>*{^2}**@{},
     <2.5mm,2.3mm>*{};<5.1mm,-2.6mm>*{^1}**@{},
    <2.4mm,2.5mm>*{};<2.4mm,5.7mm>*{^2}**@{},
    <-0.38mm,-0.2mm>*{};<-2.8mm,2.5mm>*{^1}**@{},
    \end{xy}}\Ea
  + (-1)^{d}
\Ba{c}\resizebox{7mm}{!}{\begin{xy}
 <0mm,-1.3mm>*{};<0mm,-3.5mm>*{}**@{-},
 <0.38mm,-0.2mm>*{};<2.0mm,2.0mm>*{}**@{-},
 <-0.38mm,-0.2mm>*{};<-2.2mm,2.2mm>*{}**@{-},
<0mm,-0.8mm>*{\bu};<0mm,0.8mm>*{}**@{},
 <2.4mm,2.4mm>*{\bu};<2.4mm,2.4mm>*{}**@{},
 <2.77mm,2.0mm>*{};<4.4mm,-0.8mm>*{}**@{-},
 <2.4mm,3mm>*{};<2.4mm,5.2mm>*{}**@{-},
     <0mm,-1.3mm>*{};<0mm,-5.3mm>*{^2}**@{},
     <2.5mm,2.3mm>*{};<5.1mm,-2.6mm>*{^1}**@{},
    <2.4mm,2.5mm>*{};<2.4mm,5.7mm>*{^1}**@{},
    <-0.38mm,-0.2mm>*{};<-2.8mm,2.5mm>*{^2}**@{},
    \end{xy}}\Ea
 +
\Ba{c}\resizebox{7mm}{!}{\begin{xy}
 <0mm,-1.3mm>*{};<0mm,-3.5mm>*{}**@{-},
 <0.38mm,-0.2mm>*{};<2.0mm,2.0mm>*{}**@{-},
 <-0.38mm,-0.2mm>*{};<-2.2mm,2.2mm>*{}**@{-},
<0mm,-0.8mm>*{\bu};<0mm,0.8mm>*{}**@{},
 <2.4mm,2.4mm>*{\bu};<2.4mm,2.4mm>*{}**@{},
 <2.77mm,2.0mm>*{};<4.4mm,-0.8mm>*{}**@{-},
 <2.4mm,3mm>*{};<2.4mm,5.2mm>*{}**@{-},
     <0mm,-1.3mm>*{};<0mm,-5.3mm>*{^1}**@{},
     <2.5mm,2.3mm>*{};<5.1mm,-2.6mm>*{^2}**@{},
    <2.4mm,2.5mm>*{};<2.4mm,5.7mm>*{^1}**@{},
    <-0.38mm,-0.2mm>*{};<-2.8mm,2.5mm>*{^2}**@{},
    \end{xy}}\Ea=0.
    \Ea
\right.
\Eeq
where the vertices are assumed to be ordered in such a way that the ones on the bottom come first, and edges are implicitly directed from bottom to the top.
Its arbitrary representation,
$$
\rho: \LBd \lon \cE nd_V,
$$
in a dg vector space $V$ is uniquely determined
by the values of $\rho$ on the generators,
$$
\rho \left(
 \begin{xy}
 <0mm,-0.55mm>*{};<0mm,-2.5mm>*{}**@{-},
 <0.5mm,0.5mm>*{};<2.2mm,2.2mm>*{}**@{-},
 <-0.48mm,0.48mm>*{};<-2.2mm,2.2mm>*{}**@{-},
 <0mm,0mm>*{\bu};<0mm,0mm>*{}**@{},
 \end{xy}\right)=: \Delta, \  \ \ \
\rho\left(
 \begin{xy}
 <0mm,0.66mm>*{};<0mm,3mm>*{}**@{-},
 <0.39mm,-0.39mm>*{};<2.2mm,-2.2mm>*{}**@{-},
 <-0.35mm,-0.35mm>*{};<-2.2mm,-2.2mm>*{}**@{-},
 <0mm,0mm>*{\bu};<0mm,0mm>*{}**@{},
 \end{xy}
 \right)=: [\ ,\ ],
$$
which make  $V$ into a degree $d$ Lie bialgebra in the above sense. The properad $\LB_1$ corresponds to the classical properad of Lie bialgebras which is often denoted simply by $\LB$.  The homotopy theory of (possibly, infinite-dimensional) degree $d$ Lie bialgebras is highly non-trivial --- it is controlled by M.\ Kontsevich's graph complex $\GC_{2d}^{\geq 2}$ via its oriented incarnation $\OGC_{2d+1}$
(see \cite{MW1} for the proof); in particular, in the most important case $d=1$  the famous and mysterious Grothendieck-Teichm\"uller group $GRT$ \cite{Dr2} acts on the genus completed version of the properad $\LB$ as homotopy non-trivial automorphisms. We show in this paper that this action commutes with a natural reversing direction involution of that properad.

\sip

The minimal resolution, $\HoLBd$, of the properad $\LBd$ is a dg free properad,
$$
\HoLBd:=\cF ree \left\langle E\right\rangle,
$$
 generated by an $\bS$-bimodule
 $
 E=\{E(m,n)\}_{m,n\geq 1, m+n\geq 3}
$
  given by
 $$
{E}(m,n):=\sgn_m^{\ot |d|}\ot \sgn_n^{|d|}[d(m+n-2)-1]\equiv\text{span}\left\langle\hspace{-1mm}
\Ba{c}\resizebox{17mm}{!}  {\xy
(0,9)*{^{\sigma(1)\ \ \ \sigma(2)\ \hspace{5mm}\ \ \sigma(m)}},
(0,-9)*{_{\tau(1)\ \ \ \tau(2)\ \hspace{5mm}\ \ \tau(n)}},
(0,4.9)*+{...},
(0,-4.9)*+{...},
(0,0)*{\bu}="o",
(-7,7)*{}="1",
(-3,7.0)*{}="2",
(3,7)*{}="3",
(7,7.0)*{}="4",
(-3,-7)*{}="5",
(3,-7)*{}="6",
(7,-7)*{}="7",
(-7,-7)*{}="8",
\ar @{->} "o";"1" <0pt>
\ar @{->} "o";"2" <0pt>
\ar @{->} "o";"3" <0pt>
\ar @{->} "o";"4" <0pt>
\ar @{<-} "o";"5" <0pt>
\ar @{<-} "o";"6" <0pt>
\ar @{<-} "o";"7" <0pt>
\ar @{<-} "o";"8" <0pt>
\endxy}\Ea
=(-1)^{d(|\sigma|+|\tau|)}
\Ba{c}\resizebox{12mm}{!}  {\xy
(0,9)*{^{1\ \ \ 2\ \hspace{7mm}\ \ m}},
(0,-9)*{_{1\ \ \ 2\ \hspace{7mm}\ \ n}},
(0,4.9)*+{...},
(0,-4.9)*+{...},
(0,0)*{\bu}="o",
(-7,7)*{}="1",
(-3,7.0)*{}="2",
(3,7)*{}="3",
(7,7.0)*{}="4",
(-3,-7)*{}="5",
(3,-7)*{}="6",
(7,-7)*{}="7",
(-7,-7)*{}="8",
\ar @{->} "o";"1" <0pt>
\ar @{->} "o";"2" <0pt>
\ar @{->} "o";"3" <0pt>
\ar @{->} "o";"4" <0pt>
\ar @{<-} "o";"5" <0pt>
\ar @{<-} "o";"6" <0pt>
\ar @{<-} "o";"7" <0pt>
\ar @{<-} "o";"8" <0pt>
\endxy}\Ea
 \right\rangle_{ \forall \sigma\in \bS_m \atop \forall\tau\in \bS_n}.
$$
The differential in $\HoLBd$ is given on the generators by
 \Beq\label{3: d in HoLBd_infty}
\delta
\Ba{c}\resizebox{11mm}{!}  {\xy
(0,9)*{^{1\ \ \ 2\ \hspace{7mm}\ \ m}},
(0,-9)*{_{1\ \ \ 2\ \hspace{7mm}\ \ n}},
(0,4.9)*+{...},
(0,-4.9)*+{...},
(0,0)*{\bu}="o",
(-7,7)*{}="1",
(-3,7.0)*{}="2",
(3,7)*{}="3",
(7,7.0)*{}="4",
(-3,-7)*{}="5",
(3,-7)*{}="6",
(7,-7)*{}="7",
(-7,-7)*{}="8",
\ar @{->} "o";"1" <0pt>
\ar @{->} "o";"2" <0pt>
\ar @{->} "o";"3" <0pt>
\ar @{->} "o";"4" <0pt>
\ar @{<-} "o";"5" <0pt>
\ar @{<-} "o";"6" <0pt>
\ar @{<-} "o";"7" <0pt>
\ar @{<-} "o";"8" <0pt>
\endxy}\Ea
\ \ = \ \
 \sum_{[m]=I_1\sqcup I_2\atop
 {|I_1|\geq 0, |I_2|\geq 1}}
 \sum_{[n]=J_1\sqcup J_2\atop
 {|J_1|\geq 1, |J_2|\geq 0}
}\hspace{0mm}
(-1)^{d(\# J_1 + \# I_1\# J_2 + \sigma(I_1,I_2) + \sigma(J_1,J_2))}
\Ba{c}\resizebox{18mm}{!}  {\xy
(-2,9.5)*{\overbrace{\hspace{12mm}}^{I_1}},
(0,-10)*{\underbrace{\hspace{16mm}}_{J_1}},
(10.4,19.0)*{\overbrace{\hspace{12mm}}^{I_2}},
(11,-1)*{\underbrace{\hspace{12mm}}_{J_2}},
(0,4.9)*+{...},
(0,-4.9)*+{...},
(12,14)*+{...},
(12.6,3)*+{...},
(0,0)*{\bu}="o",
(-7,7)*{}="1",
(-3,7.0)*{}="2",
(3,7)*{}="3",
(11,9.0)*{\bu}="O",
(-3,-7)*{}="5",
(3,-7)*{}="6",
(7,-7)*{}="7",
(-7,-7)*{}="8",
(5,16)*{}="U1",
(9,16)*{}="U2",
(16,16)*{}="U3",
(6,2)*{}="D1",
(10,2)*{}="D2",
(16,2)*{}="D3",
\ar @{->} "o";"1" <0pt>
\ar @{->} "o";"2" <0pt>
\ar @{->} "o";"3" <0pt>
\ar @{->} "o";"O" <0pt>
\ar @{<-} "o";"5" <0pt>
\ar @{<-} "o";"6" <0pt>
\ar @{<-} "o";"7" <0pt>
\ar @{<-} "o";"8" <0pt>
\ar @{->} "O";"U1" <0pt>
\ar @{->} "O";"U2" <0pt>
\ar @{->} "O";"U3" <0pt>
\ar @{<-} "O";"D1" <0pt>
\ar @{<-} "O";"D2" <0pt>
\ar @{<-} "O";"D3" <0pt>
\endxy}\Ea
\Eeq
where the vertices on the r.h.s.\ are ordered in such a way that the lowest one (with respect to the given flow on edges) comes first.
In the most important special case $d=1$ the properad $\HoLB_1$ is abbreviated with $\HoLB$.

\subsection{Orientation of graphs from $\HoLBd$}
%Recall that an (undirected) graph with legs (or hairs) can be define as a finite set $H(\Ga)$......
%We denote by $V(\Ga)$, the set of vertices of $\Ga$, $E(\Ga)$ the set of edges, $L(\Ga)=L_{in}(\Ga)\sqcup %L_{out}(\Ga)$ the set of legs, and $H(\Ga)$ the set of half-edges.
A generic element $\Ga$ of $\HoLBd$ (in particular, of $\LBd$) can be constructed iteratively from generators by gluing in-legs of one generator to some (or all) out-legs of other generator(s). To define such a composed graph uniquely, it must be equipped with one of the two possible {\em orientations}, $or$ or $or^{op}$, which identifies $(\Ga, or)$ with $-(\Ga,or^{op})$.

If $d$ is even, every vertex of $\Ga$ is odd so that an orientation of $\Ga$
is defined as one of  two possible unit vectors of the  1-dimensional real space
$\det \left(\R V(\Ga) \right)$ where $V(\Ga)$ stands for the set of vertices of $\Ga$ and $\R  V(\Ga)$ its linear span over $\R$. Note that $or$ does not depend on the particular choice of directions on edges.

If $d$ is odd, every $(m,n)$-generator $\Ba{c}\resizebox{7mm}{!}  {\xy
(0,9)*{^{1\ \ \ 2\ \hspace{7mm}\ \ m}},
(0,-9)*{_{1\ \ \ 2\ \hspace{7mm}\ \ n}},
(0,4.9)*+{...},
(0,-4.9)*+{...},
(0,0)*{\bu}="o",
(-7,7)*{}="1",
(-3,7.0)*{}="2",
(3,7)*{}="3",
(7,7.0)*{}="4",
(-3,-7)*{}="5",
(3,-7)*{}="6",
(7,-7)*{}="7",
(-7,-7)*{}="8",
\ar @{->} "o";"1" <0pt>
\ar @{->} "o";"2" <0pt>
\ar @{->} "o";"3" <0pt>
\ar @{->} "o";"4" <0pt>
\ar @{<-} "o";"5" <0pt>
\ar @{<-} "o";"6" <0pt>
\ar @{<-} "o";"7" <0pt>
\ar @{<-} "o";"8" <0pt>
\endxy}\Ea
$ can be understood as an odd vertex $v$ (of cohomological degree $1+2d$) with $m+n$ odd {\it half-edges}\, attached (each of degree $-d$), $h_1,\ldots, h_n$ incoming half-edges and $h^1,\ldots, h^m$ outgoing ones (in particular, each of the two degree zero generators of $\LB$ is best understood --- in the context of graph orientations --- as a degree $3$ vertex with three degree $-1$ half-edges attached). To equip such an $(m,n)$-corolla with a well-defined sign as an element of $\HoLBd$, we have to choose an ordering of half-edges and the vertex, say, as follows,
$$
h_1\wedge\ldots \wedge h_n\wedge v \wedge h^1,\ldots, h^n.
$$
The differential $\delta$ acts on this orientation by the formula
$$
\delta(h_1\wedge\ldots \wedge h_n\wedge v \wedge h^1\wedge \ldots\wedge h^n)=
(-1)^n h_1\wedge\ldots \wedge h_n\wedge (\delta v) \wedge h^1,\ldots, h^n=(-1)^n h_1\wedge\ldots \wedge h_n\wedge v'\wedge h^0 \wedge h_0\wedge v'' \wedge h^1,\ldots, h^n
$$
which explains the signs in (\ref{3: d in HoLBd_infty}).
 A generic element $\Ga\in \HoLBd$, $d\in 2\Z +1$, is built from such corolla by gluing outgoing half-edges of one corolla to the ingoing half-edges of another corolla so that each internal edge $e$ of $\Ga$ has an associated set $H(e)$ of two-half edges; the remaining half-edges constitute the set $L(\Ga)=L_{in}(\Ga)\sqcup L_{out}$ of legs of $\Ga$ which are labelled. The orientation of $\Ga$ is then uniquely determined by a unit vector of the  1-dimensional Euclidean real space
$$
\det\left(\R  H(\Ga) \oplus \R  V(\Ga) \right)\simeq \det\left(\R  H(\Ga) \right)\ot \det \left(\R  V(\Ga) \right)
$$
where $H(\Ga)$ is the set of all half-edges of $\Ga$, and $V(\Ga)$ the set of vertices. Grouping the set $H(\Ga)$ into internal edges (whose set is denoted by $E(\Ga)$) and legs, we obtain a canonical isomorphism
$$
\det\left(\R H(\Ga) \right)=\ot_{e\in E(\Ga)} \det \R H(e)\ot \det \left(\R  L_{in}(\Ga) \ot \right)\ot \det \left(\R  L_{in}(\Ga)\right)
$$
As $\Ga$ is directed and every leg is labelled, the latter Euclidean space comes equipped canonically with a distinguished unit vector so that an orientation $or$ of a generic element $\Ga\in \HoLBd$ for $d$ odd is identified again with a unit vector in $\det \left(\R V(\Ga)\right)$ (as in the case $d$ is even).
%$$
% \ot_{e\in E(\Ga)} \det(\R H(e))\ot \det \left(\R  L(\Ga) \right) \ot \det \left(\R V(\Ga) %\right)
%$$
%As edges of $\Ga$ are equipped with a fixed direction, the space $\det(\R H(e))$ has a %distinguished unital basis vector $e_{can}$ corresponding the natural ordering of half-edges %along the direction flow. Thus an orientation of $\Ga$ is given by the tensor product
%$$
%or=e_{can}\ot \overline{or},
%$$
%or simply by $or$,  where $ \overline{or}$ is a unit vector in
%$$
%\det\left(\R  L(\Ga) \right) \ot \det \left(\R V(\Ga) \right).
%  %\ot \det \left(\R \langle L_{in}(\Ga)\rangle \right)
%$$
%The tensor factor $e_{can}$  can be omitted as it is uniquely determined by the given direction %flow on $\Ga$; however it is useful sometimes to keep  as we consider below the reversion %directions involution on $\Ga$ which transforms $e_{can}$ into into a new canonical unit basis %vector $\st{e}_{can}\in \det(\R H(e))$ which is equal to $(-1)^{\# E(\Ga)} e_{can}$.

\subsection{Direction involution of $\HoLBd$}
For a graph
$\Ga \in \HoLB_{d}$ with orientation $or\in \det \left(\R V(\Ga) \right)$  we denote by  $\st{\Ga}$ an element $\HoLB_d$ obtained from $\Ga$ by reversing  simultaneously the direction of each internal edge and leg of $\Ga$ while {\it keeping its orientation $or$ unchanged}, i.e. the vertices of
$\st{\Ga}$ are ordered in exactly the same way as the vertices of $\Ga$.
The map $\Ga\rar \st{\Ga}$  squares to the identity map, but it does not commute with the differential $\delta$. Indeed, reversing arrows of both sides in (\ref{3: d in HoLBd_infty})
while keeping the ordering of vertices on the r.h.s.\ unchanged gives a graph with the orientation opposite to the one which is tacitly assumed in (\ref{3: d in HoLBd_infty}), i.e.\ we get eventually the wrong sign in the r.h.s.\ after the transformation. Hence
we have to consider a slightly different map given by
\Beq\label{2: involution on HoLBd}
\Ba{rccc}
\imath: & \HoLBd & \lon & \HoLBd\\
        & \Ga & \lonm & (-1)^{(\#V(\Ga)+1)}\st{\Ga}
%\left\{\Ba{cc} (-1)^{(\#E+\#V(\Ga)+1)}\Ga  & \text{for $d$ odd} \\
%(-1)^{\# V(\Ga) +1} \Ga & \text{for $d$ even}
%\Ea
%\right.
   %
        %(-1)^{\#V(\Ga)+1} \st{\Ga}
\Ea
\Eeq
which does commute with the differential and hence provides us with a genuine {\em direction involution}\,   of the complex $(\HoLBd, \delta)$. For example,
$$
\imath\left(\Ba{c}\resizebox{6mm}{!}{  \xy
(0,-7)*{_{1\hspace{5mm} 2}},
(0,5)*{}="U1";
(0,0)*{\bu}="o";
(-3,-5)*{}="D1";
(3,-5)*{}="D2";
\ar @{->} "D1";"o" <0pt>
\ar @{->} "D2";"o" <0pt>
\ar @{<-} "U1";"o" <0pt>
 \endxy}
 \Ea
 \right)=\Ba{c}\resizebox{6mm}{!}{  \xy
(0,7)*{_{1\hspace{5mm} 2}},
(0,-5)*{}="U1";
(0,0)*{\bu}="o";
(-3,5)*{}="D1";
(3,5)*{}="D2";
\ar @{<-} "D1";"o" <0pt>
\ar @{<-} "D2";"o" <0pt>
\ar @{->} "U1";"o" <0pt>
 \endxy}
 \Ea, \ \ \ \
 \imath\left(
 \Ba{c}\resizebox{6mm}{!}{  \xy
(0,7)*{_{1\hspace{5mm} 2}},
(0,-5)*{}="U1";
(0,0)*{\bu}="o";
(-3,5)*{}="D1";
(3,5)*{}="D2";
\ar @{<-} "D1";"o" <0pt>
\ar @{<-} "D2";"o" <0pt>
\ar @{->} "U1";"o" <0pt>
 \endxy}
 \Ea\right)=
 \Ba{c}\resizebox{6mm}{!}{  \xy
(0,-7)*{_{1\hspace{5mm} 2}},
(0,5)*{}="U1";
(0,0)*{\bu}="o";
(-3,-5)*{}="D1";
(3,-5)*{}="D2";
\ar @{->} "D1";"o" <0pt>
\ar @{->} "D2";"o" <0pt>
\ar @{<-} "U1";"o" <0pt>
 \endxy}
 \Ea
$$
%where orientations of graphs are implicitly by their planar structure, i.e\ by ordering legs %from bottom to the top, and, if on the same level, from the left to the right.
which corresponds to the duality transformation, $V\rar V^*$, of a Lie bialgebra $V$.

{\Large
\section{\bf Direction involution of oriented graph complexes}
}

\subsection{Reminder on oriented graph complexes (cf. \cite{Wi2,MultiOriented})}
\label{s:OGC}
Let $\bar\mV_v\bar\mE_e\mO\grac^{2}$ be the set of directed connected graphs $\Ga$
with  $v>0$ (labelled by $1,\dots,v$) at least 2-valent vertices, with  $e\geq 0$ (labelled by $1,\dots,v$) directed edges, and with no  passing vertices (i.e., 2-valent vertices with one incoming and one outgoing edge $\EEpassing$), and with no closed directed paths of directed edges.
For $d\in\Z$, let
\begin{equation}
\bar\mV_v\bar\mE_e\mO\G_d:=\left\langle\bar\mV_v\bar\mE_e\mO\grac^{2}\right\rangle[(d-1)e-dv+d]
\end{equation}
be the vector space of degree shifted formal linear combinations.

There is a natural right action of the group $\sym_v\times \sym_e$ on $\bar\mV_v\bar\mE_e\mO\G_d$, where $\sym_v$ permutes vertices and $\sym_e$ permutes edges.
Let $\sgn_v$ and $\sgn_e$ be one-dimensional representations of $\sym_v$, respectively $\sym_e$, where the odd permutation reverses the sign.
%They can be considered as representations of the whole product $\sym_v\times \left(\sym_e\ltimes \sym_2^{\times e}\right)$.
Let us consider the space of coinvariants:
\begin{equation}
\label{eq:VED}
\mV_v\mE_e\mO\G_{d}:=\left\{
\begin{array}{ll}
\left(\bar\mV_v\bar\mE_e\mO\G_d\otimes\sgn_e\right)_{\sym_v\times\sym_e}
\qquad&\text{for $d$ even,}\\
\left(\bar\mV_v\bar\mE_e\mO\G_d\otimes\sgn_v\right)_{\sym_v\times\sym_e}
\qquad&\text{for $d$ odd,}
\end{array}
\right.
\end{equation}
Every graph $\Ga\in \mV_v\mE_e\OG_d$ comes equipped with an orientation, $or$, which is given by an ordering of edges (up to an even permutation) in the case $d\in 2\Z$, or, respectively, by an ordering of vertices (up to an even permutation) in the case $d\in 2\Z+1$; equivalently, $or$ can be identified with a unit vector of the 1-dimensional Euclidean vector space $\det(\R E(\Ga))$, respectively $\det(\R V(\Ga))$.

The underlying vector space of
the \emph{oriented graph complex} is given by
\begin{equation}
\OG_d:=\bigoplus_{v\geq 1,e\geq 0}\mV_v\mE_e\mO\G_d.
\end{equation}
The differential $\p$ acts by edge contraction:
\begin{equation}
\p(\Gamma)=\sum_{a\in E(\Gamma)} \pm\Gamma/a
\end{equation}
where $E(\Gamma)$ is the set of edges of $\Gamma$ and $\Gamma/a$ is the graph produced from $\Gamma$ by contracting edge $a$ and merging its end vertices.
If a directed cycle or a passing vertex is produced, we consider the result to be zero.
To precisely define the sign, we agree on the following convention: as a representative of a coinvariant $\Gamma\in\mV_v\mE_e\mO\G_d$ we take $\bar\Gamma\in\bar\mV_v\bar\mE_e\mO\G_d$ where the edge being deleted is the last one (labelled by $e$), heading towards the last vertex (labelled by $v$). Then contracting the edge deletes that edge and the vertex it heads to, and leaves all the other labels unchanged, with the sign +.

%The dual complex is
%\begin{equation}
%\left(\OGC_d,\delta\right):=\left(\OG_d,d\right)^*.
%\end{equation}
%Here the differential $\delta$ acts by splitting a vertex.

\subsection{Skeleton version}

In \cite[Subsection 3.4]{MultiSourced} the isomorphic skeleton version of oriented graph complex $\mO^{sk}\G_d$ is introduced.
It is the complex of essentially the same graphs where vertices that are at least 3-valent are seen as \emph{skeleton vertices}, and the structure of edges and 2-valent vertices between them is seen as a \emph{skeleton edge}.
The set of such skeleton edges is as following
\begin{equation}
\label{eq:SkE}
\sigma_d^\infty=\{
\Ed,\;
\dE,\;
\EdE,\;
\dEd,\;
\begin{tikzpicture}[baseline=-.65ex,scale=.5]
 \node[nil] (a) at (0,0) {};
 \node[int] (b) at (1,0) {};
 \node[int] (c) at (2,0) {};
 \node[nil] (d) at (3,0) {};
 \draw (a) edge[-latex] (b);
 \draw (b) edge[latex-] (c);
 \draw (c) edge[-latex] (d);
\end{tikzpicture},\;
\begin{tikzpicture}[baseline=-.65ex,scale=.5]
 \node[nil] (a) at (0,0) {};
 \node[int] (b) at (1,0) {};
 \node[int] (c) at (2,0) {};
 \node[nil] (d) at (3,0) {};
 \draw (a) edge[latex-] (b);
 \draw (b) edge[-latex] (c);
 \draw (c) edge[latex-] (d);
\end{tikzpicture},\;
\begin{tikzpicture}[baseline=-.65ex,scale=.5]
 \node[nil] (a) at (0,0) {};
 \node[int] (b) at (1,0) {};
 \node[int] (c) at (2,0) {};
 \node[int] (d) at (3,0) {};
 \node[nil] (e) at (4,0) {};
 \draw (a) edge[-latex] (b);
 \draw (b) edge[latex-] (c);
 \draw (c) edge[-latex] (d);
 \draw (d) edge[latex-] (e);
\end{tikzpicture},\;
\begin{tikzpicture}[baseline=-.65ex,scale=.5]
 \node[nil] (a) at (0,0) {};
 \node[int] (b) at (1,0) {};
 \node[int] (c) at (2,0) {};
 \node[int] (d) at (3,0) {};
 \node[nil] (e) at (4,0) {};
 \draw (a) edge[latex-] (b);
 \draw (b) edge[-latex] (c);
 \draw (c) edge[latex-] (d);
 \draw (d) edge[-latex] (e);
\end{tikzpicture},\;
\dots
\}.
\end{equation}
Note that the differential on $\left(\mO^{sk}\G_d,\p\right)$ has two parts:
\begin{equation}
\p=\p_C\pm \p_E
\end{equation}
where the \emph{core differential} $\p_C$ contracts skeleton edges $\Ed$ and $\dE$, and the \emph{edge differential} $\p_E$ an original edge within a longer skeleton edge, thus changing the type of the skeleton edge.
We have the following result.

\begin{prop}[{\cite[Proposition 3.9, Equation (76)]{MultiSourced}}]
\label{prop:sk}
There is an isomorphism of complexes
$$
\left(\mO^{sk}\G_d,\p\right)\rightarrow
\left(\mO\G_d,\p\right).
$$
\end{prop}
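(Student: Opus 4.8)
The plan is to realize the stated map as an explicit bijection on graph generators that manifestly intertwines the orientation data, and then to verify separately that it is a chain map by matching edge contraction with the two-part skeleton differential $\p=\p_C\pm\p_E$.

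First I would define the underlying correspondence on graphs. Given a graph of $\mO^{sk}\G_d$, I replace each skeleton edge from the list $\sigma_d^\infty$ by the chain of directed edges and $2$-valent source/sink vertices that it abbreviates, gluing the two ends of the chain onto the two skeleton vertices it connects; the result lies in $\mO\G_d$ because every symbol in $\sigma_d^\infty$ is an alternating source/sink chain, so no passing vertex and no directed cycle is created internally. Conversely, given $\Gamma\in\mO\G_d$, I mark the $\geq 3$-valent vertices as skeleton vertices and collapse each maximal chain of $2$-valent non-passing vertices, together with its edges, into a single skeleton-edge symbol. The key point making this well defined is that a non-passing $2$-valent vertex is either a pure source or a pure sink, so along such a chain the edge directions are forced to alternate; hence a chain of length $k$ is exactly one of the two symbols of $\sigma_d^\infty$ with $k$ edges, and the two assignments are mutually inverse on graphs.

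Second, I would check compatibility with the coinvariant quotient, the orientations, and the degree shift. For $d$ even an orientation is a unit vector of $\det(\R E(\Gamma))$ and for $d$ odd of $\det(\R V(\Gamma))$; I would identify these determinant lines across the correspondence by fixing a canonical internal ordering of the edges and $2$-valent vertices inside each skeleton-edge symbol (reading along the chain), and then verify that this identification is equivariant for the residual $\sym_v\times\sym_e$-symmetry, so that the $\sgn_e$ and $\sgn_v$ twists and any vanishing forced by an orientation-reversing automorphism match on the two sides. The degree shift is reproduced automatically: a skeleton edge carrying $k$ original edges contains exactly $k-1$ internal $2$-valent vertices, so it contributes $(d-1)k-d(k-1)=d-k$ to $(d-1)e-dv+d$, a quantity intrinsic to the symbol, and summing over skeleton vertices and skeleton edges recovers the total degree.

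Third, I would prove the correspondence is a chain map by decomposing $\p$ on $\mO\G_d$ according to the position of the contracted edge. An edge joining two skeleton vertices is a standalone symbol $\Ed$ or $\dE$, and its contraction merges two skeleton vertices -- this is precisely $\p_C$. An edge adjacent to a skeleton vertex inside a longer skeleton edge, when contracted, absorbs one $2$-valent vertex into the adjacent skeleton vertex (whose valence is preserved) and shortens the chain by one, turning one symbol of $\sigma_d^\infty$ into a shorter one -- this is precisely $\p_E$. Contracting an edge strictly between two internal $2$-valent vertices merges a source with a sink and produces a passing vertex, hence gives zero; since both complexes set passing vertices and directed cycles to zero, the zero-conventions agree. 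What remains is to confirm that the signs coincide, which -- using the convention fixed in Subsection~\ref{s:OGC} (delete the last edge heading to the last vertex, with sign $+$) together with the canonical internal orderings above -- reduces to a finite check on each skeleton-edge symbol.

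The main obstacle I expect is exactly this sign and orientation bookkeeping in the last two steps: choosing the internal ordering conventions on the symbols of $\sigma_d^\infty$ so that the determinant lines are identified $\sym_v\times\sym_e$-equivariantly, handling the residual automorphisms of skeleton graphs (parallel skeleton edges, and symbols attached to a single skeleton vertex) so that orientation-reversing symmetries force vanishing on both sides simultaneously, and ensuring that the splitting $\p=\p_C\pm\p_E$ emerges with globally consistent signs. Everything else is a direct, if tedious, verification.
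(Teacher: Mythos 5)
Note first that the paper contains no proof of this proposition: it is quoted wholesale from \cite[Proposition 3.9, Equation (76)]{MultiSourced}, so the only comparison available is with that reference, whose argument is indeed the expand/collapse correspondence you outline. Your three steps form the right skeleton of the argument: since a non-passing $2$-valent vertex is a pure source or a pure sink, maximal chains alternate and match the symbols of $\sigma_d^\infty$; your degree count per symbol ($(d-1)k-d(k-1)=d-k$ for a chain of $k$ edges, with each skeleton vertex contributing $-d$) is correct; and your matching of the three kinds of edge contraction with $\p_C$, $\p_E$, and zero (contracting an edge between two internal $2$-valent vertices merges a source with a sink, creating a passing vertex) is exactly how the differential decomposes, including the observation that the vanishing conventions for directed cycles must agree on both sides (note that $\p_E$ can create a directed cycle at skeleton level, e.g.\ shortening $\EdE$ to a directed skeleton edge, so this is not vacuous).

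There is, however, one concrete gap: your inverse map (``mark the $\geq 3$-valent vertices and collapse maximal chains'') is undefined on graphs having \emph{no} vertex of valence $\geq 3$. Such graphs exist in $\mO\G_d$: two vertices joined by two parallel edges (a $2$-valent source and a $2$-valent sink), and more generally the even polygons with alternating sources and sinks --- precisely the polytope-like graphs responsible for the difference between the $\geq 2$-valent and trivalent graph complexes mentioned in the paper's introduction, so they are not a negligible summand. For the stated isomorphism you must either equip $\mO^{sk}\G_d$ with generators for closed skeleton edges (chains glued to themselves, with their own symmetry), or verify that the definition in the cited source does so; as written, your correspondence on generators misses these graphs entirely. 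Beyond that, the deferred sign verification is acceptable in a proposal, but be aware that it is where essentially all the content of the cited Equation (76) lives: the identification of the lines $\det(\R E(\Gamma))$ for $d$ even, respectively $\det(\R V(\Gamma))$ for $d$ odd, must be made compatibly with the $\sym_v\times\sym_e$-coinvariants in \eqref{eq:VED} (so that generators killed by orientation-reversing automorphisms vanish on both sides simultaneously), and the finite per-symbol check is what pins down the sign in $\p=\p_C\pm\p_E$.
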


The set of skeleton edges \eqref{eq:SkE} can be reduced without changing the homology. Let
\begin{equation}
\label{eq:rSkE}
\sigma_d^1=\{\Ed,\;\dE,\;\Ess\},
\end{equation}
where
\begin{equation}
\Ess:=\frac{1}{2}\left(\EdE-\dEd\right).
\end{equation}
Note also that
\begin{equation}
\begin{tikzpicture}[baseline=-.65ex,scale=.5]
 \node[nil] (a) at (0,0) {};
 \node[nil] (c) at (1.4,0) {};
 \draw (a) edge[<-] (c);
 \draw (.7,.15) edge (.7,-.15);
\end{tikzpicture}
=-(-1)^d\Ess.
\end{equation}
Graph complex $\mO^1\G_d$ is the subcomplex of $\mO^{sk}\G_d$ spanned by graphs whose skeleton edges belong to $\sigma_d^1$.
Here, the core differential $\p_C$ contracts $\Ed$ or $\dE$, and the edge differential $\p_E$ changes
\begin{equation}
\p_E(\Ess)=\Ed-(-1)^d\dE.
\end{equation}
We have the following result.

\begin{prop}[{\cite[Proposition 3.15 part 2.]{MultiSourced}}]
\label{prop:1}
The inclusion
$$
\left(\mO^1\G_d,\p\right)\hookrightarrow
\left(\mO^{sk}\G_d,\p\right)
$$
is a quasi-isomorphism.
\end{prop}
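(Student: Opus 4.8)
The plan is to compare the two complexes through the spectral sequence of a filtration that isolates the edge differential $\p_E$. Both $\mO^1\G_d$ and $\mO^{sk}\G_d$ split as direct sums over the first Betti number (loop order), which is preserved by $\p=\p_C\pm\p_E$, and on each such summand I would filter by the number of \emph{skeleton} (i.e.\ at least trivalent) vertices. Contraction of a one-edge skeleton edge $\Ed$ or $\dE$ by the core differential $\p_C$ merges two skeleton vertices and strictly lowers this number, whereas $\p_E$ contracts the original edge adjacent to a skeleton vertex inside a longer skeleton edge, absorbing a bivalent vertex into an at least trivalent one and hence preserving the number of skeleton vertices. Thus on the associated graded the induced differential is exactly $\p_E$, and the inclusion $\mO^1\G_d\hookrightarrow\mO^{sk}\G_d$ is filtered; since for fixed loop order the filtration has only finitely many steps, both spectral sequences converge and it suffices to prove that the inclusion is a quasi-isomorphism on the $\p_E$-page.

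On the associated graded the skeleton multigraph (the skeleton vertices together with the adjacency pattern of skeleton edges) is preserved by $\p_E$, because shortening a skeleton edge keeps its two endpoints. Hence the graded complex decomposes, up to the $\sym_v\times\sym_e$ coinvariants and the orientation line (both harmless in characteristic zero), as a direct sum over skeleton multigraphs of tensor products, with one tensor factor $C_e$ for each skeleton edge $e$. By Künneth it therefore suffices to prove the following local statement: for a skeleton edge joining two fixed skeleton-vertex ends, the per-edge complex $C_e$ spanned by all alternating directed chains listed in $\sigma_d^\infty$, with differential $\p_E$ contracting the two end edges, has one-dimensional cohomology concentrated in the one-edge part $\langle\Ed,\dE\rangle$, and the subcomplex spanned by $\sigma_d^1=\{\Ed,\dE,\Ess\}$ induces an isomorphism on this cohomology.

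To prove the local statement I would grade $C_e$ by the chain length $k$ (the number of original edges in the skeleton edge): contracting an end edge lowers $k$ by one and flips the direction at that end, whereas contracting an interior edge between two bivalent vertices merges a source with a sink and produces a forbidden passing vertex, hence vanishes. Consequently $C_e$ is the complex $\cdots\to V_3\to V_2\to V_1\to 0$ in which each $V_k$ is two-dimensional (the two alternating chains of length $k$, determined by the direction of the first edge) and $\p_E$ is the sum of the two end-contractions. The whole computation reduces to showing that $\p_E\colon V_k\to V_{k-1}$ has rank exactly one for every $k\geq2$: granting this, the rank bookkeeping gives $\dim H_k=\dim V_k-\operatorname{rank}(V_k\to V_{k-1})-\operatorname{rank}(V_{k+1}\to V_k)$, which is $0$ for $k\geq2$ and $1$ for $k=1$, so the cohomology is one-dimensional and sits in the one-edge part. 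Since $\Ess\in V_2$ satisfies $\p_E(\Ess)=\Ed-(-1)^d\dE\neq0$, this nonzero vector spans the rank-one image $\mathrm{im}(\p_E\colon V_2\to V_1)$, so the cohomology of the $\sigma_d^1$-subcomplex, namely $V_1/\langle\Ed-(-1)^d\dE\rangle$, maps isomorphically onto $V_1/\mathrm{im}(\p_E\colon V_2\to V_1)$; note that $\Ess$ is indispensable here, since $\langle\Ed,\dE\rangle$ alone would carry a two-dimensional cohomology.

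The main obstacle is precisely the rank-one claim for $\p_E\colon V_k\to V_{k-1}$, which is where the orientation and sign conventions for edge contraction must be pinned down. A priori the two end-contractions could produce a rank-two (hence acyclic) or a rank-zero map, and only the correct interplay of signs — the same interplay recorded in the identity $\p_E(\Ess)=\Ed-(-1)^d\dE$ and in the relation expressing a crossed backward edge as $-(-1)^d\Ess$ — forces the two-by-two contraction matrix to have vanishing determinant while remaining nonzero. Once the rank is shown to equal one for all $k$, the local statement, the Künneth decomposition and the spectral sequence comparison assemble into the asserted quasi-isomorphism.
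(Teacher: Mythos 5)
First, note that the paper does not actually prove this proposition: it is imported verbatim from \cite[Proposition 3.15 part 2]{MultiSourced}, so your attempt can only be measured against the method of that line of work. Your global architecture is the right one --- filter by the number of skeleton ($\geq 3$-valent) vertices so that $\p_C$ drops filtration while $\p_E$ survives to the associated graded, use the splitting by loop order (which bounds the number of skeleton vertices, hence the filtration length) for convergence, and reduce to a per-skeleton-edge computation in which the cohomology of the chain-of-length-$k$ complex is one-dimensional, concentrated on the length-one part, with $\p_E(\Ess)=\Ed-(-1)^d\dE$ spanning the rank-one image. However, the central K\"unneth step is wrong as stated, and this is a genuine gap. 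The associated graded for a fixed skeleton multigraph $\Phi$ is \emph{not} the tensor product $\bigotimes_e C_e$: graphs in $\mO^{sk}\G_d$ and $\mO^1\G_d$ must have no closed directed paths, and the differential sets to zero any contraction producing one. Since only skeleton edges of length one ($\Ed$, $\dE$) can be traversed by a directed path (a longer alternating chain contains an interior sink or source and is impassable), this condition couples the labels of \emph{different} skeleton edges: the $E_1$-complex is the quotient of $\bigotimes_e C_e$ by the subcomplex spanned by labelings whose length-one edges contain a directed cycle, and in particular the component of $\p_E$ contracting a length-two chain to a single edge may vanish for global reasons. The problem bites exactly where your argument lives, because the cohomology of $\bigotimes_e C_e$ is represented by all-length-one labelings --- precisely the configurations the cycle condition removes. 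The gap is repairable (e.g., write the forbidden subcomplex as a finite sum, over directed cycles $\gamma$ of $\Phi$, of tensor products with the factors along $\gamma$ frozen, and run inclusion--exclusion together with your per-edge quasi-isomorphism on every intersection), but some such argument must be supplied; it is telling that the paper's own proof of Proposition \ref{prop:acyclic} confronts exactly this difficulty with the auxiliary complexes $\mO\Phi_i$, the two-way edge $\EE$, spanning trees, and \cite[Lemma 4.3]{MultiSourced}.

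Second, you explicitly leave the rank-one claim for $\p_E\colon V_k\to V_{k-1}$ unproven, calling it the main obstacle; in fact no sign computation is needed. Each of the two end contractions of an alternating chain of length $k\geq 2$ produces a single admissible chain with coefficient $\pm 1$ (interior contractions create passing vertices and vanish), so all four matrix entries are nonzero and in particular every map $V_k\to V_{k-1}$ is nonzero. Since the per-edge component of $\p_E$ squares to zero, the nonzero image of $V_{k+1}\to V_k$ lies in the kernel of $V_k\to V_{k-1}$, forcing that kernel to be nontrivial and the rank to be exactly one; moreover image and kernel, both one-dimensional with image contained in kernel, coincide, giving exactness in lengths $\geq 2$ directly rather than via your rank bookkeeping. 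Two smaller points: graphs with no $\geq 3$-valent vertices (closed alternating polygons at loop order one) fall outside the shape decomposition and need separate mention, and for skeleton edges with coinciding endpoints the end-swap symmetry acts on $C_e$, so the decomposition must be taken equivariantly before passing to coinvariants --- harmless in characteristic zero, as you say, but only if the comparison maps are equivariant, which should be recorded. With the cycle-constraint repair and the rank argument above, your outline closes.
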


\subsection{Reversing directions of all edges} Let $\st{\Ga}$ be the graph obtained from a graph $\Ga\in \OG_d$ by reversing simultaneously the direction of every its edge while keeping its orientation $or$ unchanged (which makes sense as $V(\st{\Ga})=V(\Ga)$ and $E(\st{\Ga})=E(\Ga)$). The associated map
\Beq\label{3: involution on OG_d}
\Ba{rccc}
\imath: & \OG_d & \lon & \OG_d\\
        & \Ga & \lonm &
\left\{\Ba{cc} (-1)^{e+v+1}\st{\Ga}  & \text{for $d$ even} \\
(-1)^{v +1} \st{\Ga} & \text{for $d$ odd}
\Ea
\right.
\Ea
\Eeq
commutes with the differential and defines therefore an involution of $\OG_d$. The involution has two eigenvalues, $+1$ and $-1$, so that it decomposes the complex into a direct sum,
\begin{equation}
\OG_d= \OG_d^+ \oplus \OG_d^-,
\end{equation}
where, for example, the subcomplex $\OG_d^-$ is spanned by graphs $\Ga$ satisfying the condition
\Beq\label{3: relation on OG^-}
\st{\Ga}\;= \left\{\Ba{cc} (-1)^{e+v}\Ga  & \text{for $d$ even} \\
(-1)^{v} \Ga & \text{for $d$ odd}
\Ea
\right.
\Eeq

We shall show next that the complex  $\OG_d^-$ is acyclic.

\subsection{Skeleton version}

The involution $\iota$ can be defined on skeleton complex $\mO^1\G_d$.
% in order to commute with the inclusion $\left(\mO^1\G_d,d\right)\hookrightarrow \left(\mO\G_d,d\right)$.
First, let $\st{\Ga}$ be the graph obtained from a graph $\Ga\in \mO^1\G_d$ by reversing simultaneously the direction of every its edge of type $\Ed$ or $\dE$, while edges of type $\Ess$ are retained.
It is easy to check that $\st{\Ga}$ is mapped to $(-1)^{s}\st{\Ga}$ by the inclusion from Proposition \ref{prop:1}, where $s$ is the number of edges of type $\Ess$.
Therefore, if we define the involution $\iota$ on $\mO^1\G_d$ as
\Beq\label{3: involution on O1G_d}
\Ba{rccc}
\imath: & \mO^1\G_d & \lon & \mO^1\G_d\\
        & \Ga & \lonm &
\left\{\Ba{cc} (-1)^{e+v+s+1} \st{\Ga}  & \text{for $d$ even} \\
(-1)^{v +1} \st{\Ga} & \text{for $d$ odd,}
\Ea
\right.
\Ea
\Eeq
it will commute with the inclusion from Proposition \ref{prop:1}.

By analogy to the above, we decompose
$$
\mO^1\G_d= \mO^1\G_d^+ \oplus \mO^1\G_d^-,
$$
the skeleton complex into a direct sum of subcomplexes spanned by eigenvectors with eigenvalue $+1$ and, respectively $-1$.

\begin{prop}
\label{prop:skeleton}
The inclusions
$$
\left({\mO}^1\G_d^-,\p\right)\hookrightarrow
\left({\mO}\G_d^-,\p\right),\ \ \ \
\left({\mO}^1\G_d^+,\p\right)\hookrightarrow
\left({\mO}\G_d^+,\p\right)
$$
are quasi-isomorphisms.
\end{prop}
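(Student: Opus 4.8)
The plan is to reduce the statement to a purely formal consequence of the quasi-isomorphisms already established, combined with the equivariance of the relevant maps under the involution and the hypothesis $\mathrm{char}\,\K=0$. First I would assemble from Proposition~\ref{prop:sk} and Proposition~\ref{prop:1} the composite map
$$
\Phi\colon \left(\mO^1\G_d,\p\right)\hookrightarrow\left(\mO^{sk}\G_d,\p\right)\xrightarrow{\cong}\left(\mO\G_d,\p\right),
$$
that is, the inclusion of Proposition~\ref{prop:1} followed by the isomorphism of Proposition~\ref{prop:sk}; under this identification $\mO^1\G_d$ becomes a subcomplex of $\mO\G_d$ and $\Phi$ is precisely the inclusion appearing in the statement. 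Being the composite of a quasi-isomorphism and an isomorphism, $\Phi$ is a quasi-isomorphism.

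The key input, already prepared in the discussion preceding \eqref{3: involution on O1G_d}, is that $\Phi$ is \emph{equivariant} for the involution, i.e.\ $\Phi\circ\imath=\imath\circ\Phi$, where $\imath$ denotes \eqref{3: involution on O1G_d} on the source and \eqref{3: involution on OG_d} on the target. Granting this, the argument is formal. Since $\mathrm{char}\,\K=0$, the idempotents $\pi_\pm=\tfrac12(\Id\pm\imath)$ are chain maps (they commute with $\p$ because $\imath$ does), and they split each complex into the eigen-subcomplexes $\mO^1\G_d=\mO^1\G_d^+\oplus\mO^1\G_d^-$ and $\mO\G_d=\mO\G_d^+\oplus\mO\G_d^-$ introduced above. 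Because $\Phi$ commutes with $\imath$ it commutes with $\pi_\pm$, hence maps $\mO^1\G_d^\pm$ into $\mO\G_d^\pm$ and is block-diagonal, restricting to the two inclusions $\Phi^\pm\colon\mO^1\G_d^\pm\hookrightarrow\mO\G_d^\pm$ of the statement. Taking cohomology, which commutes with the finite direct sum, yields $H^\bu(\Phi)=H^\bu(\Phi^+)\oplus H^\bu(\Phi^-)$; as $H^\bu(\Phi)$ is an isomorphism respecting this block decomposition, each $H^\bu(\Phi^\pm)$ is an isomorphism, i.e.\ both $\Phi^\pm$ are quasi-isomorphisms.

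The part demanding genuine care --- rather than a deep obstacle --- is the equivariance $\Phi\circ\imath=\imath\circ\Phi$ used in the second paragraph. One must verify that the sign prescriptions in \eqref{3: involution on O1G_d} are exactly those that make $\Phi$ a strict (not merely up-to-homotopy) intertwiner: on the skeleton side the reversal $\st{\Ga}$ flips only the edges of type $\Ed$ and $\dE$ and retains the $\Ess$-edges, whereas on $\mO\G_d$ the involution flips \emph{all} edges, and the two differ by the factor $(-1)^s$ recorded before \eqref{3: involution on O1G_d}, with $s$ the number of $\Ess$-edges. I would check the two parity cases $d$ even and $d$ odd separately, confirming that the extra $(-1)^s$ present in the even case (and absent in the odd case) precisely absorbs this discrepancy. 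Once strict equivariance is in hand, nothing beyond characteristic zero and the fact that $\imath$ is a chain map is required.
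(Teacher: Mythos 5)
Your proposal is correct and follows essentially the same route as the paper: the paper's (very terse) proof likewise composes the isomorphism of Proposition \ref{prop:sk} with the quasi-isomorphism of Proposition \ref{prop:1}, and relies on the fact that the sign $(-1)^{s}$ was built into the definition \eqref{3: involution on O1G_d} precisely so that this composite strictly intertwines the involutions, whence the eigenspace decomposition splits the quasi-isomorphism into the two restrictions $\Phi^{\pm}$. Your explicit spelling-out of the $\pi_{\pm}=\tfrac12(\Id\pm\imath)$ splitting in characteristic zero and the block-diagonality of $H^{\bu}(\Phi)$ is exactly the content the paper summarizes as ``straightforward from the construction above.''
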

\begin{proof}
Straightforward from Propositions \ref{prop:sk} and \ref{prop:1} and the construction above.
\end{proof}

\subsection{Acyclicity} The strategy of our proof of the following statement resembles the one from \cite{MultiSourced}.

\begin{prop}
\label{prop:acyclic}
The complex  $\left({\mO}^1\G_d^-,\p\right)$  is acyclic.
\end{prop}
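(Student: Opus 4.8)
The plan is to prove acyclicity entirely inside the reduced skeleton model, where each skeleton edge is one of the three types $\Ed$, $\dE$, $\Ess$ and the differential splits as $\p=\p_C\pm\p_E$ with $\p_E(\Ess)=\Ed-(-1)^d\dE$. First I would filter $\left({\mO}^1\G_d^-,\p\right)$ by the number of skeleton vertices (equivalently, by the number of skeleton edges), noting that the core differential $\p_C$ strictly lowers this count while the edge differential $\p_E$ preserves it. Thus $\p_C$ strictly decreases the filtration degree and the differential induced on the associated graded is exactly $\p_E$. Since in a fixed cohomological degree the numbers of vertices and edges are bounded, the filtration is bounded in each degree, the associated spectral sequence converges, and it suffices to show that the $\p_E$-cohomology of ${\mO}^1\G_d^-$ vanishes.

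The key structural point is that $\p_E$ acts one skeleton edge at a time and leaves the underlying reduced graph unchanged. Working over a fixed reduced graph, and using that the functor of $\sym_v\times\sym_e$-coinvariants is exact in characteristic zero, I would identify the $\p_E$-complex with a tensor product, indexed by the skeleton edges, of the local two-term complex $C=\bigl(\langle\Ess\rangle\xrightarrow{\ \p_E\ }\langle\Ed,\dE\rangle\bigr)$. I would then diagonalise the (appropriately normalised) local direction-reversing involution on $C$: in top degree its eigenlines are the symmetric combination $\Ed+\dE$ and the antisymmetric one $\Ed-\dE$, and one checks that $\Ess$ shares a common eigenspace with the combination $\Ed-(-1)^d\dE$ appearing as $\p_E(\Ess)$. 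Hence $C$ decomposes under the involution into a one-dimensional summand with zero differential and an acyclic two-term summand $\langle\Ess\rangle\xrightarrow{\cong}\langle\Ed-(-1)^d\dE\rangle$; for $d$ even the surviving line is the symmetric one and for $d$ odd the antisymmetric one, which is the only difference between the two cases.

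Because a tensor product of complexes over a field is acyclic as soon as one tensor factor is acyclic, any graph possessing at least one skeleton edge lying in the acyclic local eigen-summand contributes to an acyclic factor and therefore drops out of $\p_E$-cohomology. The heart of the argument — and the step I expect to be the \emph{main obstacle} — is the sign bookkeeping showing that every graph in the minus eigenspace ${\mO}^1\G_d^-$ must contain at least one such edge. Concretely, one has to match the global sign $(-1)^{e+v+s+1}$ (for $d$ even), respectively $(-1)^{v+1}$ (for $d$ odd), defining $\iota$ against the product of the local involution eigenvalues over the edges, and verify that the graphs assembled entirely from the ``bad'' non-acyclic local eigenline always land in $\mO^1\G_d^+$ and never in $\mO^1\G_d^-$. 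Once this parity statement is in place, the associated graded of ${\mO}^1\G_d^-$ is a direct sum of acyclic tensor products, its $\p_E$-cohomology vanishes, the spectral sequence collapses at the first page, and we conclude $H\left({\mO}^1\G_d^-,\p\right)=0$.
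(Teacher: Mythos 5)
There is a genuine gap, and it sits exactly at the step you treat as routine: the identification of the $\p_E$-complex over a fixed core graph $\Phi$ with the tensor product, over the edges of $\Phi$, of the local two-term complex $C=\bigl(\langle\Ess\rangle\xrightarrow{\p_E}\langle\Ed,\dE\rangle\bigr)$. This ignores the defining \emph{orientedness} constraint of $\mO^1\G_d$: a labelling of the edges of $\Phi$ by $\Ed$, $\dE$, $\Ess$ is an allowed basis element only if the directed edges contain no closed directed path ($\Ess$-edges do not transmit directed paths, since their internal $2$-valent vertex is a source or a sink), and $\p_E(\Ess)=\Ed-(-1)^d\dE$ is applied with the convention that any term creating a directed cycle is set to zero. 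So the relevant complex is not $C^{\ot e}$ but its quotient by the subcomplex spanned by labellings containing a directed cycle, and K\"unneth does not apply: the no-cycle condition couples all edges globally. A minimal counterexample is $\Phi$ the double edge between two vertices: $C^{\ot 2}$ has one-dimensional cohomology in the top degree, spanned by $\bigl(\Ed+(-1)^d\dE\bigr)\ot\bigl(\Ed+(-1)^d\dE\bigr)$, whereas in the true complex the two cyclic labellings are absent, both remaining top-degree basis graphs are hit (the cycle-completing terms of $\p_E$ vanish), and the one-dimensional cohomology sits one degree lower, represented by one edge $\Ess$ and the other directed (symmetrized). Hence even if you completed the sign bookkeeping you flag as the ``main obstacle,'' your associated graded computes the wrong complex. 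This global constraint is precisely what the paper's proof is engineered to handle: after the same spectral-sequence and Maschke reductions, it introduces an auxiliary undirected edge type $\EE$ on which $\p_E$ acts trivially, and runs an induction along a spanning tree $a_1,\dots,a_{v-1}$ of $\Phi$ using the quasi-isomorphisms $f_i$ of \cite[Lemma 4.3]{MultiSourced} (on $a_i$: $\Ess\mapsto 0$, $\Ed\mapsto\EE$, $\dE\mapsto(-1)^d\EE$), ending in a one-dimensional complex (spanning tree of $\EE$-edges, all other edges $\Ess$) which is manifestly $\iota$-fixed because reversal preserves the types $\EE$ and $\Ess$; so the minus part dies with no delicate sign matching needed.

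Two smaller points. First, your convergence argument ``in a fixed cohomological degree the numbers of vertices and edges are bounded'' is false in general: the degree only fixes the combination $dv-(d-1)e$, along which $v$ and $e$ are unbounded as the loop order varies. The paper repairs this by first splitting the complex by loop number, which $\p_C$, $\p_E$ and $\iota$ all preserve; within fixed loop order and degree the counts are determined, and the filtration (by the number of skeleton vertices --- note that counting skeleton vertices and counting skeleton edges are \emph{not} equivalent filtrations, though either works here since $\p_C$ lowers both and $\p_E$ preserves both) is bounded. Second, your reduction to graphs with labelled vertices and edges via exactness of coinvariants in characteristic zero is correct and coincides with the paper's Maschke step; that part of your plan stands.
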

\begin{proof}
We set up the spectral sequence on the number of vertices, such that the first differential is the edge differential $\p_E$ that changes $\Ess\mapsto\Ed-(-1)^d\dE$. Standard splitting of complexes as the product of complexes with fixed loop number implies that the spectral sequence converges to the homology of the whole complex. So it is enough to prove that $\left({\mO}^{1}\G_d^-,\p_E\right)$ is acyclic.
Neither the differential $\p_E$ nor the involution $\iota$ change the total number of vertices $v$ and the total number $e$ of edges, so the latter complexes decomposes into a direct sum of complexes
$$
\left({\mO}^{1}\G_d^-,\p_E\right)=
\bigoplus_{v\geq 1,e\geq 0}\left( \mV_v\mE_e\mO^1\G_d^-, \p_E\right)
$$
spanned by graphs with {\em fixed}\,  numbers $e$ and $\nu$. By Maschke theorem, to prove
acyclicity of $\left( \mV_v\mE_e\mO^1\G_d^-, \p_E\right)$ it is enough to prove acyclicity of its version,
$$
\left({\mO}^1\bar\mV_v\bar\mE_e\G_d^-,\p_E\right),
$$
in which vertices and edges are distinguished, say labelled by integers from $[v]$ and $[e]$ respectively.

\sip

Let a \emph{core graph} of a graph $\Gamma\in {\mO}^1\bar\mV_v\bar\mE_e\G_d$ be the graph $\Phi$ gained from $\Gamma$ by forgetting the types of edges. We say that $\Gamma$ is of the \emph{shape} $\Phi$.

Neither differential $\p_E$ nor the involution $\iota$ changes the core graph, so both complexes ${\mO}^1\bar\mV_v\bar\mE_e\G_d$ and ${\mO}^1\bar\mV_v\bar\mE_e\G_d^-$ split as direct sums of complexes for the fixed core graph.
Let us fix a core graph $\Phi$ with $v$ labelled vertices and $e$ labelled edges. Let $\mO\Phi$ be the complex spanned by graphs of the form $\Phi$ whose edges are labelled by $\Ed$ in any direction, or $\Ess$ in predefined direction.
The involution $\iota$ acts as in \eqref{3: involution on O1G_d}. Let us recall it by an equivalent expression:
$$
\Ga  \lonm
-\left\{\Ba{cc} (-1)^{v+\text{number of edges of type}\Ed} \st{\Ga}  & \text{for $d$ even} \\
(-1)^{v} \st{\Ga} & \text{for $d$ odd,}
\Ea
\right.
$$
It leads to the decomposition to eigenspaces of eigenvalues $+1$ and $-1$:
 $$
 \mO\Phi =\mO\Phi^+ \oplus \mO\Phi^-.
 $$
The complex ${\mO}^1\bar\mV_v\bar\mE_e\G_d^-$ splits as a direct sum of $\mO\Phi^-$ for all possible core graphs $\Phi$, so it is enough to prove that $(\mO\Phi^-,\p)$ is acyclic.

In $\Phi$ we choose $v-1$ edges, say $a_1,\dots,a_{v-1}$, such that for every $i=1,\dots,v-1$ the edges $\{a_1,\dots, a_i\}$ form a sub-graph of $\Phi$ that is a tree. Clearly, $\{a_1,\dots, a_{v-1}\}$ forms a spanning tree.
For every $i=0,\dots,v-1$ we form graph complex ${\mO}\Phi_{i}$ as follows.

First we introduce a new type of edge, $\EE$. The differential $\p_E$ does not act on it.
$\mO\Phi_i$ is the complex spanned by graphs of the form $\Phi$ whose edges $a_1,\dots,a_i$ are labelled by $\EE$, and other edges are labelled by $\Ed$ in any direction, or $\Ess$ in predefined direction, such that there are no cycles.
Here, a \emph{cycle} is a sequence of vertices $x_0,x_1,\dots, x_p=x_0$ in $\Phi$ such that there is an edge from $x_j$ to $x_{j+1}$ labelled by $\EE$ or $\Ed$ in the direction from $x_j$ to $x_{j+1}$. Roughly speaking, edges $\EE$ are considered to go in both directions for the matter of cycles. An example of a graph is shown in Figure \ref{fig:complexSigma}.

\begin{figure}[H]
$$
\begin{tikzpicture}[baseline=1ex]
 \node[int] (a) at (-1,-.5) {};
 \node[int] (b) at (1,-.5) {};
 \node[int] (c) at (0,0) {};
 \node[int] (d) at (0,1.1) {};
 \draw (a) edge (b);
 \draw (a) edge (c);
 \draw (a) edge (d);
 \draw (b) edge (c);
 \draw (b) edge (d);
 \draw (c) edge[bend left=15] (d);
 \draw (c) edge[bend right=15] (d);
\end{tikzpicture}\quad\quad
\begin{tikzpicture}[baseline=1ex]
 \node[int] (a) at (-1,-.5) {};
 \node[int] (b) at (1,-.5) {};
 \node[int] (c) at (0,0) {};
 \node[int] (d) at (0,1.1) {};
 \draw (a) edge node[below] {$\scriptstyle a_3$} (c);
 \draw (b) edge node[below] {$\scriptstyle a_2$} (c);
 \draw (c) edge[bend left=15] node[right] {$\scriptstyle a_1$} (d);
\end{tikzpicture}\quad\quad
\begin{tikzpicture}[baseline=1ex]
 \node[int] (a) at (-1,-.5) {};
 \node[int] (b) at (1,-.5) {};
 \node[int] (c) at (0,0) {};
 \node[int] (d) at (0,1.1) {};
 \draw (a) edge[-latex] (b);
 \draw (a) edge[-latex] (c);
 \draw (a) edge[crossed,<-] (d);
 \draw (b) edge[very thick,snakeit] (c);
 \draw (b) edge[crossed,<-] (d);
 \draw (c) edge[very thick,snakeit,bend left=15] (d);
 \draw (c) edge[bend right=15,crossed,<-] (d);
\end{tikzpicture}
$$
\caption{\label{fig:complexSigma}
For a core graph $\Phi$ on the left with chosen edges in the middle, an example of a graph in $\mO\Phi_2$ is drawn on the right.}
\end{figure}
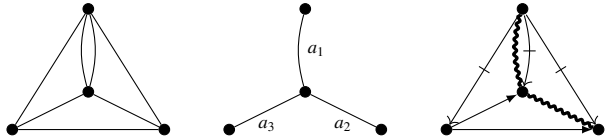

It is clear that
\begin{equation}
\left(\mO\Phi_0,\delta_E\right)=
\left(\mO\Phi,\delta_E\right).
\end{equation}

\begin{lemma}[{\cite[Lemma 4.3]{MultiSourced}}]
\label{lem:QIf}
For every $i=1,\dots,v-1$ there is a quasi-isomorphism $f_i:\mO\Phi_{i-1}\rightarrow\mO\Phi_i$.
\end{lemma}

Recall that the essential difference between $\mO\Phi_{i-1}$ and $\mO\Phi_i$ is in the edge $a_i$, it has to be of type $\EE$ in $\mO\Phi_i$, and it is of another type in $\mO\Phi_{i-1}$. The map $f_i:\mO\Phi_{i-1}\rightarrow\mO\Phi_i$ does not change other edges, and maps $a_i$ as follows.
\begin{equation}
\label{eq:fi}
\Ess\mapsto 0,\quad
\Ed\mapsto\EE,\quad
\dE\mapsto(-1)^{d}\EE.
\end{equation}

Similarly to all complexes, we will define the the involution $\iota$ on $\mO\Phi^i$ for every $i$, such that it commutes with the quasi-isomorphisms $f_i$:
\begin{equation}
\label{eq:lrO}
\Ga  \lonm
-\left\{\Ba{cc} (-1)^{v+\text{number of edges of type $\Ed$ or $\EE$}} \st{\Ga}  & \text{for $d$ even} \\
(-1)^{v+\text{number of edges of the type $\EE$}} \st{\Ga} & \text{for $d$ odd,}
\Ea
\right.
\end{equation}
where $\st{\Ga}$ is gained from $\Ga$ by reversing edges of types $\Ed$ or $\dE$, while retaining other edges.
Similarly as before, $\mO\Phi_i$ splits into the eigenspaces of eigenvectors $+1$ and $-1$ of the involution $\iota$:
 $$
 \mO\Phi_i =\mO\Phi^+_i \oplus \mO\Phi^-_i.
 $$
Lemma \ref{lem:QIf} implies that the restriction $f_i:\mO\Phi_{i-1}^-\rightarrow\mO\Phi_i^-$ is a quasi-isomorphism for every $i=1,\dots,v-1$.
Then the composition
\begin{equation}
f:=f_{v-1}\circ\dots\circ f_1:
{\mO}\Phi^-\rightarrow{\mO}\Phi_{v-1}^{-}
\end{equation}
is also a quasi-isomorphism.

The complex $\mO\Phi_{v-1}$ is one dimensional, spanned by the graph with a spanning tree of the edges of type $\EE$, and all other edges of the type $\Ess$, since every edge of type $\Ed$ would form a cycle. The involution $\iota$ sends that graph to itself, cf.\ \eqref{eq:lrO}, therefore it is in the part $\mO\Phi_{v-1}^+$, so $\mO\Phi_{v-1}^-$ is zero space. Therefore ${\mO}\Phi^-$ is acyclic because of the quasi-isomorphism $f$. This finishes the proof.
\end{proof}

%\section{The result}
%\label{s:res}
\begin{thm}
\label{thm:main}
\begin{itemize}
\item[]
\item The inclusion $\left(\mO\G_d^+,\p\right)\hookrightarrow\left(\mO\G_d,\p\right)$ is a quasi-isomorphism.
\item Involution $\iota$ on $\left(\mO\G_d,\p\right)$ induces the identity on its homology $H\left(\mO\G_d,\p\right)$.
\end{itemize}
\end{thm}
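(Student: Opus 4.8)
The plan is to obtain Theorem \ref{thm:main} as a short formal consequence of Propositions \ref{prop:skeleton} and \ref{prop:acyclic}, so that all the genuine work has already been carried out in the acyclicity argument for $\mO^1\G_d^-$.

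First I would establish that the full eigen-subcomplex $\mO\G_d^-$ is acyclic. Proposition \ref{prop:acyclic} asserts that its skeleton model $\mO^1\G_d^-$ has vanishing homology, while Proposition \ref{prop:skeleton} provides a quasi-isomorphism $\left(\mO^1\G_d^-,\p\right)\hookrightarrow\left(\mO\G_d^-,\p\right)$. Since a quasi-isomorphism preserves homology, this yields
\[
H\left(\mO\G_d^-,\p\right)\cong H\left(\mO^1\G_d^-,\p\right)=0.
\]

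Next, for the first bullet, I would invoke the direct-sum decomposition $\mO\G_d=\mO\G_d^+\oplus\mO\G_d^-$ introduced together with the involution $\iota$; this is a decomposition of \emph{complexes} precisely because $\iota$ commutes with $\p$, so that $\p$ preserves each eigenspace. As homology commutes with finite direct sums,
\[
H\left(\mO\G_d,\p\right)=H\left(\mO\G_d^+,\p\right)\oplus H\left(\mO\G_d^-,\p\right)=H\left(\mO\G_d^+,\p\right),
\]
where the last equality uses the acyclicity just proved. Under this identification the map $\left(\mO\G_d^+,\p\right)\hookrightarrow\left(\mO\G_d,\p\right)$ is the inclusion of the first summand, hence it induces an isomorphism on homology and is therefore a quasi-isomorphism.

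Finally, for the second bullet, I would recall that by construction $\iota$ acts as multiplication by $+1$ on the summand $\mO\G_d^+$ and by $-1$ on $\mO\G_d^-$ already at the chain level, hence also on the homology of each summand. Since $H\left(\mO\G_d^-,\p\right)=0$, only the $+1$-eigenspace contributes to $H\left(\mO\G_d,\p\right)=H\left(\mO\G_d^+,\p\right)$, and therefore $\iota$ induces the identity map on $H\left(\mO\G_d,\p\right)$. The only point requiring a little care is the verification that the eigenspace splitting is compatible with the differential, which is immediate from $\iota\p=\p\iota$; there is no genuine obstacle at this stage, the entire analytic difficulty having been absorbed into Proposition \ref{prop:acyclic}.
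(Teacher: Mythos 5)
Your proposal is correct and follows exactly the route the paper intends: its proof of Theorem \ref{thm:main} simply says the result is straightforward from Propositions \ref{prop:skeleton} and \ref{prop:acyclic}, and your write-up fills in precisely those routine steps (acyclicity of $\mO\G_d^-$ via the skeleton quasi-isomorphism, then the eigenspace decomposition compatible with $\p$). Nothing is missing and no alternative method is used.
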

\begin{proof}
Straightforward from Propositions \ref{prop:skeleton} and \ref{prop:acyclic}.
%We need to prove that a closed element $\Gamma\in\mO\G_d$ and $\iota(\Gamma)$ represent the same homology class, i.e.\ that there is $\gamma\in\mO\G_d$ such that $\p\gamma=\Gamma-\iota(\Gamma)$.
%
%Indeed, let $\Gamma\in\mO\G_d$ such that $\p\Gamma=0$. It represents an element $[\Gamma]\in{\mO}\G_d^-$, that is still closed. By Propositions \ref{prop:skeleton} and \ref{prop:acyclic} there is $[\gamma]\in{\mO}\G_d^-$ such that $\p[\gamma]=[\Gamma]$. This means that $\p\gamma$ and $\Gamma$ are identified by the action from (\ref{3: relation on OG^-}), i.e.\ there is $\alpha$ such that
%$$
%d(\gamma)-\Gamma=\alpha-\iota(\alpha).
%$$
%Therefore
%$$
%\p(\gamma-\iota(\gamma))=\Gamma-\iota(\Gamma),
%$$
%which proves the claim.
\end{proof}

Let $\OGC_d$ be the graph complex dual to $\OG_d$; its differential splits vertices and commutes with the direction involution given by the same formula (\ref{3: involution on OG_d}) so that $\OGC_d$ splits into a direct sum $\OGC_d^+\oplus \OGC_d^-$ spanned by graphs with eigenvalue $+1$ and $-1$ respectively.

\begin{cor}\label{3: Corollary about OGC^-} { The complex $\OGC_d^-$ is acyclic. In particular, $H^0(\OGC_3^+)=\grt_1$.}
\end{cor}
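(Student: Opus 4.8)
The plan is to derive the corollary entirely by dualizing Theorem~\ref{thm:main}; no new graph-combinatorial work should be needed. First I would extract from that theorem the acyclicity of the eigenvalue $-1$ part of the \emph{homological} complex $\OG_d$. Since $\OG_d=\OG_d^+\oplus\OG_d^-$ is a splitting into eigenspaces of a chain involution, the homology splits accordingly, and the first bullet of Theorem~\ref{thm:main} — that $\OG_d^+\hookrightarrow\OG_d$ is a quasi-isomorphism — forces the complementary summand $\OG_d^-$ to be acyclic. (Equivalently, the second bullet says the involution acts as the identity on $H(\OG_d,\p)$, so the $-1$ eigenspace of the homology vanishes, i.e.\ $H(\OG_d^-,\p)=0$.)

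The second step is to transport this across the duality between $\OG_d$ and $\OGC_d$. The key structural point is that in each fixed bi-arity, i.e.\ for a fixed number $v$ of vertices and $e$ of edges, the space $\mV_v\mE_e\mO\G_d$ is finite-dimensional; hence $\OGC_d$ is the degreewise-finite-dimensional linear dual of $\OG_d$, with the vertex-splitting differential the transpose of the edge-contracting $\p$. Under the tautological pairing of a graph with its dual graph, the involution on $\OGC_d$ prescribed by the same formula~(\ref{3: involution on OG_d}) is the transpose of the involution on $\OG_d$: reversing all directions is an involution of the underlying set of graphs, and the signs in~(\ref{3: involution on OG_d}) depend only on $v$, $e$ and the parity of $d$, so they agree on the two sides of the pairing. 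Consequently the eigenspace decomposition dualizes, giving $\OGC_d^{\pm}\cong(\OG_d^{\pm})^*$.

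It then remains to invoke that, for a complex of finite-dimensional vector spaces over a field of characteristic zero, dualization preserves acyclicity: $H^\bu(\OGC_d^-)\cong H_\bu(\OG_d^-)^*=0$, which is the first assertion. For the second assertion, acyclicity of $\OGC_d^-$ makes the inclusion $\OGC_d^+\hookrightarrow\OGC_d$ a quasi-isomorphism, so $H^0(\OGC_3^+)=H^0(\OGC_3)$; and $H^0(\OGC_3)=\grt_1$ by the identification recalled in the introduction. This closes the argument.

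I expect the only genuine obstacle to be the duality bookkeeping in the second step. One must verify carefully that the signs in~(\ref{3: involution on OG_d}), together with the orientation conventions (an ordering of edges for $d$ even, of vertices for $d$ odd), make the involution on $\OGC_d$ \emph{literally} the transpose of the one on $\OG_d$, so that the $+1$- and $-1$-eigenspaces dualize to one another rather than being swapped or re-signed. Once this compatibility is confirmed, the finite-dimensionality in each bi-arity renders the transfer of acyclicity routine, and the corollary follows immediately from Theorem~\ref{thm:main} as the proof indicates.
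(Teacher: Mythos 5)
Your proposal is correct and follows essentially the same route as the paper, which states the corollary without proof precisely because it is the immediate dualization of Theorem~\ref{thm:main} (via the observation that the vertex-splitting differential and the involution on $\OGC_d$, given by the same formula~(\ref{3: involution on OG_d}), are the transposes of those on $\OG_d$) combined with Willwacher's identification $H^0(\OGC_3)=\grt_1$. One minor simplification: your appeal to finite-dimensionality in each bi-arity is not even needed, since linear duality over a field of characteristic zero is exact and hence $H^\bu(C^*)\cong\bigl(H_\bu(C)\bigr)^*$ for arbitrary complexes of vector spaces.
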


{\Large
\section{\bf Direction involution on $\HoLBd$ and  Grothendieck-Teichm\"uller group}
}

\subsection{On cohomology of the derivation complex of $\HoLBd$}
Let $\wHoLBd$ be the genus completion of the dg properad $\HoLBd$.
As $\HoLBd$ is free, every continuous derivation $D$ of $\wHoLBd$ is uniquely determined by its values on the generators,
$$
D :\Ba{c}\resizebox{10mm}{!}  {\xy
(0,9)*{^{1\ \ \ 2\ \hspace{7mm}\ \ m}},
(0,-9)*{_{1\ \ \ 2\ \hspace{7mm}\ \ n}},
(0,4.9)*+{...},
(0,-4.9)*+{...},
(0,0)*{\bu}="o",
(-7,7)*{}="1",
(-3,7.0)*{}="2",
(3,7)*{}="3",
(7,7.0)*{}="4",
(-3,-7)*{}="5",
(3,-7)*{}="6",
(7,-7)*{}="7",
(-7,-7)*{}="8",
\ar @{->} "o";"1" <0pt>
\ar @{->} "o";"2" <0pt>
\ar @{->} "o";"3" <0pt>
\ar @{->} "o";"4" <0pt>
\ar @{<-} "o";"5" <0pt>
\ar @{<-} "o";"6" <0pt>
\ar @{<-} "o";"7" <0pt>
\ar @{<-} "o";"8" <0pt>
\endxy}\Ea
\lon \wHoLBd.
$$
Thus the Lie algebra of derivations of $\wHoLBd$ can be identified, as a graded vector space, with the space of coinvariants \cite{MW1}
$$
\Der(\wHoLBd) =
 \prod_{m,n\geq 1} \left(\wHoLBd(m,n) \otimes \sgn_m^{\ot |d|}\otimes \sgn_d^{\ot |d|}\right)^{\bS_m\times \bS_d}[1+2d -d(m+n)]
$$
The differential $\delta$ in $\HoLBd$ makes $\Der(\wHoLBd)$  into a complex with the differential $\Delta=[\delta,\ ]$; this complex is spanned by directed  graphs which have incoming and outgoing  legs (whose labels are (skew)symmetrized and hence omitted in the pictures), for example
$$
\Ga= \resizebox{15mm}{!}{ \xy
%(0,0)*+{_a}*\cir{}="o",
(0,0)*{\bu}="d1",
(10,0)*{\bu}="d2",
(-5,-5)*{}="dl",
(5,-5)*{}="dc",
(15,-5)*{}="dr",
(0,10)*{\bu}="u1",
(10,10)*{\bu}="u2",
(5,15)*{}="uc",
(15,15)*{}="ur",
(0,15)*{}="ul",
\ar @{<-} "d1";"d2" <0pt>
\ar @{<-} "d1";"dl" <0pt>
\ar @{<-} "d1";"dc" <0pt>
%\ar @{<-} "d2";"dc" <0pt>
\ar @{<-} "d2";"dr" <0pt>
\ar @{<-} "u1";"d1" <0pt>
\ar @{<-} "u1";"d2" <0pt>
\ar @{<-} "u2";"d2" <0pt>
\ar @{<-} "u2";"d1" <0pt>
\ar @{<-} "uc";"u2" <0pt>
\ar @{<-} "ur";"u2" <0pt>
\ar @{<-} "ul";"u1" <0pt>
\endxy} \in \Der(\wHoLBd)
$$
These graphs $\Ga$ come equipped with an orientations $or$ given by an ordering of its vertices (up to an even permutation).
 The value of the differential $\Delta$ on an element $\Ga\in \Der(\wHoLBd)$ is obtained by splitting vertices of $\Ga$  and by attaching new $(p,q)$ corollas at each single external (incoming or outgoing) leg for all possible values of $p$ and $q$ of $\Ga$ (cf.\ \cite{MW1}),
$$
\Delta \Gamma =
 \delta\Gamma
 \pm
 \sum\Ba{c}
 \resizebox{9mm}{!}{ \xy
 (0,0)*+{\Ga}="Ga",
(-5,5)*{\bu}="0",
(-8,2)*{}="-1",
(-8,8)*{}="1",
(-5,8)*{}="2",
(-2,8)*{}="3",
(-5,2)*{}="2'",
\ar @{<-} "0";"Ga" <0pt>
\ar @{<-} "0";"-1" <0pt>
\ar @{->} "0";"1" <0pt>
\ar @{->} "0";"2" <0pt>
\ar @{->} "0";"3" <0pt>
\ar @{<-} "0";"2'" <0pt>
 \endxy}\Ea
  \pm
 \sum\Ba{c}
\resizebox{9mm}{!}{  \xy
 (0,0)*+{\Ga}="Ga",
(-5,-5)*{\bu}="0",
(-8,-2)*{}="-1",
(-8,-8)*{}="1",
(-5,-2)*{}="2",
(-2,-8)*{}="3",
\ar @{->} "0";"Ga" <0pt>
\ar @{->} "0";"-1" <0pt>
\ar @{<-} "0";"1" <0pt>
\ar @{->} "0";"2" <0pt>
\ar @{<-} "0";"3" <0pt>
 \endxy}\Ea
 $$
where $\delta$ is the standard differential in $\HoLBd$, i.e.\ it acts on the vertices of $\Ga$ by formula (\ref{3: d in HoLBd_infty}). We define an involution of the dg Lie algebra of derivations as follows (cf. (\ref{2: involution on HoLBd}))
$$%\Beq\label{4: involution on DerHoLBd}
\Ba{rccc}
i: & \Der(\wHoLBd) & \lon & \Der(\wHoLBd)\\
        & \Ga & \lon & (-1)^{(\#V(\Ga)+1)}\st{\Ga}
\Ea
$$%\Eeq
where $\st{\Ga}$ is the graph obtained from $\Ga$ by reversing simultaneously directions on all edges and legs but keeping its orientation (i.e.\ the ordering of vertices) unchanged. This map decomposes the derivation complex into a direct sum,
$$
\Der(\wHoLBd)=\Der(\wHoLBd)^+ \oplus \Der(\wHoLBd)^{-}
$$
corresponding to the eigenvalues $+1$ and $-1$.

\begin{prop}\label{4: prop on acyclicity of Der^-HoLB}
{ The complex $\Der(\wHoLBd)^{-}$ is acyclic while $H^\bu(\Der(\wHoLBd)^+)=\K\oplus H^\bu(\OGC_{2d+1})$}.
\end{prop}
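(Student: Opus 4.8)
The plan is to deduce both assertions from the analysis of the oriented graph complex carried out in Section 3 together with the computation of $H^\bu(\Der(\wHoLBd))$ from \cite{MW1}. Recall that \cite{MW1} provides a chain of quasi-isomorphisms reducing the derivation complex $\Der(\wHoLBd)$ --- whose generators are directed graphs carrying external in- and out-legs --- to the (legless) oriented graph complex, the outcome being the identification $H^\bu(\Der(\wHoLBd)) = \K \oplus H^\bu(\OGC_{2d+1})$, in which the extra one-dimensional summand $\K$ is represented by the rescaling (Euler) derivation that multiplies each $(m,n)$-generator by a constant. The only new input required here is that this reduction can be run $i$-equivariantly and that, on the target, the direction involution $i$ of $\Der(\wHoLBd)$ corresponds to the involution $\iota$ of $\OGC_{2d+1}$ studied in Section 3.

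First I would observe that $i$, being given by $\Ga \mapsto (-1)^{\#V(\Ga)+1}\st{\Ga}$, preserves the number of vertices, the number of edges and the genus of a graph, and hence preserves every filtration (by vertices, or by genus) used in the reduction of \cite{MW1}; since $i$ commutes with the differential $\Delta$, it therefore descends to each page of the associated spectral sequence. Reversing all arrows merely turns an in-leg into an out-leg and vice versa while leaving the combinatorial type of each reduction step untouched, so the maps of \cite{MW1} intertwine $i$ with direction reversal on the successive complexes. On the final term one lands on $\OGC_{2d+1}$, and here the crucial sign check is immediate: since $2d+1$ is odd, the defining sign of the involution in \eqref{3: involution on OG_d} is $(-1)^{v+1}$, which matches the sign $(-1)^{\#V(\Ga)+1}$ in the definition of $i$ (orientations being an ordering of vertices on both sides). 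Thus, under the identification of \cite{MW1}, the induced map $i_\ast$ is exactly $\id_\K \oplus \iota_\ast$; in particular the class spanning $\K$ is represented by a cocycle symmetric under simultaneous reversal of all edges, hence lies in the $+1$-eigenspace.

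Given this, the two claims follow formally. By Corollary \ref{3: Corollary about OGC^-} (applied with $d$ replaced by the odd integer $2d+1$) the complex $\OGC_{2d+1}^-$ is acyclic, equivalently $\iota_\ast = \id$ on $H^\bu(\OGC_{2d+1})$; combined with the previous paragraph this shows $i_\ast = \id$ on all of $H^\bu(\Der(\wHoLBd))$. Since $i$ is a chain involution splitting the complex as $\Der(\wHoLBd) = \Der(\wHoLBd)^+ \oplus \Der(\wHoLBd)^-$, the $-1$-eigenspace of cohomology is precisely $H^\bu(\Der(\wHoLBd)^-)$; as $i_\ast$ has no $-1$-eigenvectors in cohomology, $\Der(\wHoLBd)^-$ is acyclic, and consequently $H^\bu(\Der(\wHoLBd)^+) = H^\bu(\Der(\wHoLBd)) = \K \oplus H^\bu(\OGC_{2d+1})$.

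I expect the main obstacle to be the second paragraph: carefully verifying that the reduction of \cite{MW1} from the legged derivation complex to the legless $\OGC_{2d+1}$ is genuinely $i$-equivariant (rather than merely commuting up to a homotopy that could distort the induced action), and tracking all signs through the degree substitution $d \mapsto 2d+1$ and through the passage between orientations-by-vertices on the two sides. A convenient alternative, should the equivariance of the \cite{MW1} zig-zag be awkward to certify, is to rerun the skeleton and spectral-sequence argument of Section 3 directly on $\Der(\wHoLBd)$, since $i$ manifestly preserves the relevant filtration and the edge differential; but the cleanest exposition is to invoke the equivariant form of the \cite{MW1} identification and reduce everything to Corollary \ref{3: Corollary about OGC^-}.
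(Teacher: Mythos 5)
Your proposal is correct and takes essentially the same route as the paper: the paper invokes the explicit morphism $\cF\colon \OGC_{2d+1}\to \Der(\wHoLBd)$ of \cite{MW1} (a quasi-isomorphism up to the one rescaling class $r$), which --- being defined simply by summing over all attachments of external legs --- manifestly commutes with the direction involutions on both sides, so the zig-zag/spectral-sequence equivariance you flag as the main obstacle never actually arises. Given that strict equivariance, restricting $\cF$ to the $\pm$-eigenspaces and applying Corollary~\ref{3: Corollary about OGC^-} (with the rescaling class landing in the $+1$-part, as you correctly note) yields both claims exactly as you outline.
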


\begin{proof}
There is a canonical morphism of dg Lie algebras constructed in \cite{MW1} (in a slightly more generality),
\Beq\label{4: Morhism F from OGC to}
\Ba{rccc}
 \cF\colon & \OGC_{2d+1} &\to & \Der(\wHoLBd)\\
         &   \Ga & \to & \cF(\Ga)
         \Ea
\Eeq
where the values of the derivation $\cF(\Ga)$
on the generators of the $\HoLBd$ are
given,  by definition, as follows
\Beq \label{equ:def GC action 1}
\cF(\Ga)\left(
\Ba{c}\resizebox{9.7mm}{!}  {\xy
(0,9)*{^{1\ \ \ 2\ \hspace{7mm}\ \ m}},
(0,-9)*{_{1\ \ \ 2\ \hspace{7mm}\ \ n}},
(0,4.9)*+{...},
(0,-4.9)*+{...},
(0,0)*{\bu}="o",
(-7,7)*{}="1",
(-3,7.0)*{}="2",
(3,7)*{}="3",
(7,7.0)*{}="4",
(-3,-7)*{}="5",
(3,-7)*{}="6",
(7,-7)*{}="7",
(-7,-7)*{}="8",
\ar @{->} "o";"1" <0pt>
\ar @{->} "o";"2" <0pt>
\ar @{->} "o";"3" <0pt>
\ar @{->} "o";"4" <0pt>
\ar @{<-} "o";"5" <0pt>
\ar @{<-} "o";"6" <0pt>
\ar @{<-} "o";"7" <0pt>
\ar @{<-} "o";"8" <0pt>
\endxy}\Ea\right)
:=
 \sum_{[n]\rar V(\Ga)\atop [m]\rar V(\Ga)}  \Ba{c}\resizebox{11.5mm}{!}  {\xy
 (-6,7)*{^1},
(-3,7)*{^2},
(2.5,7)*{},
(7,7)*{^m},
(-3,-8)*{_2},
(3,-6)*{},
(7,-8)*{_n},
(-6,-8)*{_1},
(0,4.5)*+{...},
(0,-4.5)*+{...},
(0,0)*+{\Ga}="o",
(-6,6)*{}="1",
(-3,6)*{}="2",
(3,6)*{}="3",
(6,6)*{}="4",
(-3,-6)*{}="5",
(3,-6)*{}="6",
(6,-6)*{}="7",
(-6,-6)*{}="8",
\ar @{->} "o";"1" <0pt>
\ar @{->} "o";"2" <0pt>
\ar @{->} "o";"3" <0pt>
\ar @{->} "o";"4" <0pt>
\ar @{<-} "o";"5" <0pt>
\ar @{<-} "o";"6" <0pt>
\ar @{<-} "o";"7" <0pt>
\ar @{<-} "o";"8" <0pt>
\endxy}\Ea
\Eeq
 where the sum is taken over all ways of attaching the incoming $n$ legs and outgoing $m$ legs to the graph $\Ga$, and setting to zero every resulting graph if it contains a vertex with valency $<3$ or
   with no at least one incoming  or at least one outgoing edge. This map is proven in \cite{MW1} to be a quasi-isomorphism up to one rescaling class (of degree zero) given by the following element of $\Der(\wHoLBd)$
$$
r=  \sum_{m,n}(m+n-2)
  \overbrace{
  \underbrace{
 \Ba{c}\resizebox{8mm}{!}  {\xy
(0,4.5)*+{...},
(0,-4.5)*+{...},
(0,0)*{\bu}="o",
(-6,5)*{}="1",
(-3,5)*{}="2",
(3,5)*{}="3",
(6,5)*{}="4",
(-3,-5)*{}="5",
(3,-5)*{}="6",
(6,-5)*{}="7",
(-6,-5)*{}="8",
\ar @{->} "o";"1" <0pt>
\ar @{->} "o";"2" <0pt>
\ar @{->} "o";"3" <0pt>
\ar @{->} "o";"4" <0pt>
\ar @{<-} "o";"5" <0pt>
\ar @{<-} "o";"6" <0pt>
\ar @{<-} "o";"7" <0pt>
\ar @{<-} "o";"8" <0pt>
\endxy}\Ea
 }_n
 }^{m}.
 $$
The maps $\cF$ respects orientations of graphs and commutes with the direction involution of both sides, and hence induces a strict quasi-isomorphism
$$
\cF^-: \OGC_{2d+1}^- \lon \Der(\wHoLBd)^-
$$
and a quasi-isomorphism up to one rescaling class
$$
\cF^+: \OGC_{2d+1}^+ \lon \Der(\wHoLBd)^+
$$
implying with the help of Corollary {\ref{3: Corollary about OGC^-}} the required claims.
\end{proof}

\subsection{A suitable extension of $\OGC_d$} The direct sum of complexes
$$
\OGC_d^{ext}:= \K\oplus \OGC_{d}
$$
can be made into a dg Lie algebra by extending the  standard  Lie bracket in $\OGC_d$ to the basis vector $1\in \K$ (``the unique graph with no vertices and edges") as follows \cite{MW2}
$$
[1,1]=0, \ \  [1,\Gamma]:=2(\#V(\Ga) - \#E(\Ga))\Ga.
$$
Then $H^0(\OGC_d^{ext})=\grt$, the Lie algebra of the full Grothendieck-Techm\"uller group $GRT$ (rather than of its subgroup $GRT_1$), and the above map (\ref{4: Morhism F from OGC to}) extends to a quasi-isomorphism of dg Lie algebras
\Beq\label{4: F^ext}
\cF^{ext}\colon  \OGC_{2d+1}^{ext} \to  \Der(\wHoLBd)
\Eeq
which send the extra generator $1$ into the rescaling class $r$. The involution $\imath$ extends to $\OGC_d^{ext}$ in the obvious way,
$$
\OGC_d^{ext+}= \OGC_d^{+} \oplus \K, \ \ \ \OGC_d^{ext-}=\OGC_d^{-},
$$
keeping the eigenvalue $-1$ subspace acyclic.

\subsection{On the action of the Grothendieck-Teichm\"uller group\ on $\LB$} Since the natural epimorphism
$$
p: \HoLBd\lon \LBd,
$$
is a quasi-isomorphism,
the derivation complex $\Der(\wHoLBd)$ admits a much smaller incarnation \cite{MW1}
$$
\Der(\wHoLBd)[-1]\simeq \Def(\wHoLBd \stackrel{p}{\rar} \widehat{\LB}_d) \equiv
 \prod_{m,n\geq 1} \left(\widehat{\LB}_d(m,n) \otimes \sgn_m^{\ot |d|}\otimes \sgn_n^{\ot |d|}\right)^{\bS_m\times \bS_m}[d(2-m-n)]
$$
as a deformation complex of the above map $p$ given in terms of solely trivalent graphs. The action of $GRT$ on $\widehat{\LB}$ is the exponentiation of the action of $\grt$ on $\widehat{\LB}$ by derivations  which in turn depends (up to homotopy equivalence) on the representation of any element of $[g]\in \grt$ as a cycle in the graph complex $\OGC_3$. By
Corollary {\ref{3: Corollary about OGC^-} and Proposition \ref{4: prop on acyclicity of Der^-HoLB}, one can always find a cycle representative $g$ of $[g]$ which belongs to $\OGC_3^{ext+}$ and hence satisfies a condition that its actions on the generators of $\LB$ (which fully determines the action of $g$ on the whole properad $\widehat{\LB}$)
commutes with the involution, i.e.
$$
\cF(g)\left(
\Ba{c}\resizebox{6mm}{!}{  \xy
(0,-7)*{_{1\hspace{5mm} 2}},
(0,5)*{}="U1";
(0,0)*{\circ}="o";
(-3,-5)*{}="D1";
(3,-5)*{}="D2";
\ar @{->} "D1";"o" <0pt>
\ar @{->} "D2";"o" <0pt>
\ar @{<-} "U1";"o" <0pt>
 \endxy}
 \Ea\right)=\imath \circ \cF(g)\left(\Ba{c}\resizebox{6mm}{!}{  \xy
(0,7)*{_{1\hspace{5mm} 2}},
(0,-5)*{}="U1";
(0,0)*{\circ}="o";
(-3,5)*{}="D1";
(3,5)*{}="D2";
\ar @{<-} "D1";"o" <0pt>
\ar @{<-} "D2";"o" <0pt>
\ar @{->} "U1";"o" <0pt>
 \endxy}
 \Ea \right)\ \  \text{or, equivalently}, \ \
  \cF(g)\left(\Ba{c}\resizebox{6mm}{!}{  \xy
(0,7)*{_{1\hspace{5mm} 2}},
(0,-5)*{}="U1";
(0,0)*{\circ}="o";
(-3,5)*{}="D1";
(3,5)*{}="D2";
\ar @{<-} "D1";"o" <0pt>
\ar @{<-} "D2";"o" <0pt>
\ar @{->} "U1";"o" <0pt>
 \endxy}
 \Ea \right) =\imath \circ
 \cF(g)\left(
\Ba{c}\resizebox{6mm}{!}{  \xy
(0,-7)*{_{1\hspace{5mm} 2}},
(0,5)*{}="U1";
(0,0)*{\circ}="o";
(-3,-5)*{}="D1";
(3,-5)*{}="D2";
\ar @{->} "D1";"o" <0pt>
\ar @{->} "D2";"o" <0pt>
\ar @{<-} "U1";"o" <0pt>
 \endxy}
 \Ea\right),
$$
This proves the positive answer to {\sf Question 1} from the Introduction.

\bip

{\Large
\section{\bf An application to the deformation quantization theory}
}

\subsection{Universal quantizations of Lie bialgebras as morphisms of props}

There is a polydifferential endofunctor \cite{MW2}
$$
\caD: \text{\sf Category of dg props} \lon \text{\sf Category of dg props}
$$
which  has the property
that for any dg prop $\cP$ and its any representation, $\rho: \cP\rar \cE nd_V$,
 in a dg vector space $V$ the associated dg prop $\caD\cP$ has an associated representation, $\caD\rho: \caD\cP\rar \cE nd_{{\odot^\bu} V}$,
 in the graded commutative algebra ${\odot^\bu} V$ given in terms of polydifferential (with respect to the standard multiplication in ${\odot^\bu} V$) operators. We refer to \S 5.2 of \cite{MW2} for full details and explain briefly only the explicit structure of the dg prop\footnote{Any dg properad $\cP$ has an associated dg prop which is denoted often by the same letter (because the associated functor from the category of properads to the category of props is exact).}
 $
\caD{\LB}$. If elements of $\LB$ has all legs labelled differently, the elements of
 $\caD{\LB}$ can be understood as graphs from $\LB$ whose different in-legs (or out-legs) may have  identical numerical labels. For example, with the element
 $$
 \Ba{c}\resizebox{9mm}{!}{
\xy
(+3,11)*{\bu}="0",
 (-3,8)*{\bu}="a",
(-5,2)*+{_1}="b_1",
(5,2)*+{_2}="b_2",
(-5,18)*+{^1}="u_1",
(5,18)*+{^2}="u_2",
\ar @{->} "a";"0" <0pt>
\ar @{<-} "a";"b_1" <0pt>
\ar @{<-} "0";"b_2" <0pt>
\ar @{->} "a";"u_1" <0pt>
\ar @{->} "0";"u_2" <0pt>
\endxy}
\Ea
  \in \LB(2,2)
$$
can generate several different elements in $\caD{\LB}$ depending on which legs receive identical labels, e.g.
 $$
  \Ba{c}\resizebox{9mm}{!}{
\xy
(+3,11)*{\bu}="0",
 (-3,8)*{\bu}="a",
(-5,2)*+{_1}="b_1",
(5,2)*+{_1}="b_2",
(-5,18)*+{^1}="u_1",
(5,18)*+{^2}="u_2",
\ar @{->} "a";"0" <0pt>
\ar @{<-} "a";"b_1" <0pt>
\ar @{<-} "0";"b_2" <0pt>
\ar @{->} "a";"u_1" <0pt>
\ar @{->} "0";"u_2" <0pt>
\endxy}
\Ea
  \in \caD\LB(2,1), \ \ \ \
   \Ba{c}\resizebox{9mm}{!}{
\xy
(+3,11)*{\bu}="0",
 (-3,8)*{\bu}="a",
(-5,2)*+{_1}="b_1",
(5,2)*+{_2}="b_2",
(-5,18)*+{^1}="u_1",
(5,18)*+{^1}="u_2",
\ar @{->} "a";"0" <0pt>
\ar @{<-} "a";"b_1" <0pt>
\ar @{<-} "0";"b_2" <0pt>
\ar @{->} "a";"u_1" <0pt>
\ar @{->} "0";"u_2" <0pt>
\endxy}
\Ea
  \in \caD\LB(1,2).
$$
Hence the elements of $\caD{\LB}$ can be best described by adding to graph representations of elements of $\widehat{\LB}$  new white labelled in-vertices (as well as labelled out-vertices) which can have several, or none, in-legs (resp., out-legs) attached to them, for example
 $$
\Ba{c}\resizebox{9mm}{!}{
\xy
(+3,11)*{\bu}="0",
 (-3,8)*{\bu}="a",
(0,2)*+{_1}*\frm{o}="b";
(-5,18)*+{_1}*\frm{o}="u_1";
(5,18)*+{_2}*\frm{o}="u_2";
\ar @{->} "a";"0" <0pt>
\ar @{<-} "a";"b" <0pt>
\ar @{<-} "0";"b" <0pt>
\ar @{->} "a";"u_1" <0pt>
\ar @{->} "0";"u_2" <0pt>
\endxy}
\Ea\in \caD\LB(2,1), \ \ \ \
\Ba{c}\resizebox{9mm}{!}{
\xy
(+3,11)*{\bu}="0",
 (-3,8)*{\bu}="a",
(0,18)*+{_1}*\frm{o}="u";
(-5,2)*+{_1}*\frm{o}="b_1";
(5,2)*+{_2}*\frm{o}="b_2";
\ar @{->} "a";"0" <0pt>
\ar @{<-} "a";"b_1" <0pt>
\ar @{<-} "0";"b_2" <0pt>
\ar @{->} "a";"u" <0pt>
\ar @{->} "0";"u" <0pt>
%\ar @{-} "a";"R" <0pt>
%\ar @{->} "R";"u_2" <0pt>
\endxy}
\Ea\in \caD\LB(1,2).
$$
The functor $\caD$ constructs props acting on symmetric tensor algebras  $\odot^\bu V$ which have canonical multiplication and co-multiplication.  These two canonical operations gets encoded into the structure of $\caD\LB$ by allowing graphs as above with no legs attached to the new labelled white vertices; for example graphs of the form
$\Ba{c}\resizebox{6mm}{!}{
\xy
(0,3)*+{_1}*\frm{o};
(-3,-3)*+{_1}*\frm{o};
(3,-3)*+{_2}*\frm{o};
\endxy}\Ea$ and $\Ba{c}\resizebox{6mm}{!}{
\xy
(0,-3)*+{_1}*\frm{o};
(-3,3)*+{_1}*\frm{o};
(3,3)*+{_2}*\frm{o};
\endxy}\Ea
 $ are allowed in $\caD(\widehat{\LB})$,
 and control precisely the aforementioned canonical operations in $\odot^\bu V$.

\sip

It has been shown in \cite{MW2} that any universal  quantization of (possibly, infinite-dimensional) Lie bialgebras can be understood as a morphism of props
\Beq\label{5: morphism F}
\cQ: \Assb \lon \caD\widehat{\LB}
\Eeq
satisfying a certain non-triviality condition. Here $\Assb$ is the prop of associative bialgebras which has, by definition,  two sets of generators controlling compatible multiplication and co-multiplication which are associative but not necessarily (co)commutative.  More precisely, $\cA ss\cB$ is the quotient,
$$
\Assb:= {\cF ree\langle A \rangle}/(R)
$$
of the free prop, $\cF ree\langle A \rangle$, generated by an $\bS$-bimodule $A=\{A(m,n)\}$ with all $A(m,n)$ zero except the following ones (cf.\ \S 2)
$$
A(2,1):=
\K[\bS_2]\ot \id_1\equiv\mbox{span}\left\langle \hspace{-2mm}
\Ba{c}\resizebox{6mm}{!}{  \xy
(0,7)*{_{1\hspace{5mm} 2}},
(0,-5)*{}="U1";
(0,0)*{\circ}="o";
(-3,5)*{}="D1";
(3,5)*{}="D2";
\ar @{<-} "D1";"o" <0pt>
\ar @{<-} "D2";"o" <0pt>
\ar @{->} "U1";"o" <0pt>
 \endxy}
 \Ea
\hspace{-1mm},\hspace{-1mm}
\Ba{c}\resizebox{6mm}{!}{  \xy
(0,7)*{_{2\hspace{5mm} 1}},
(0,-5)*{}="U1";
(0,0)*{\circ}="o";
(-3,5)*{}="D1";
(3,5)*{}="D2";
\ar @{<-} "D1";"o" <0pt>
\ar @{<-} "D2";"o" <0pt>
\ar @{->} "U1";"o" <0pt>
 \endxy}
 \Ea
 \hspace{-1mm}  \right\rangle
 %%%%%%%%%
 ,\ \ \ \ A(1,2):=
\id_1\ot \K[\bS_2]\equiv
\mbox{span}\left\langle
 \hspace{-2mm}
\Ba{c}\resizebox{6mm}{!}{  \xy
(0,-7)*{_{1\hspace{5mm} 2}},
(0,5)*{}="U1";
(0,0)*{\circ}="o";
(-3,-5)*{}="D1";
(3,-5)*{}="D2";
\ar @{->} "D1";"o" <0pt>
\ar @{->} "D2";"o" <0pt>
\ar @{<-} "U1";"o" <0pt>
 \endxy}
 \Ea
 \hspace{-1mm} ,  \hspace{-1mm}
\Ba{c}\resizebox{6mm}{!}{  \xy
(0,-7)*{_{2\hspace{5mm} 1}},
(0,5)*{}="U1";
(0,0)*{\circ}="o";
(-3,-5)*{}="D1";
(3,-5)*{}="D2";
\ar @{->} "D1";"o" <0pt>
\ar @{->} "D2";"o" <0pt>
\ar @{<-} "U1";"o" <0pt>
 \endxy}
 \Ea
  \hspace{-1mm}
\right\rangle
$$
modulo the ideal generated by relations
$$
R:\left\{
\Ba{c}\resizebox{8mm}{!}{  \xy
(0,7)*{_{\ \hspace{5mm} 3}},
(-3,12)*{_{1 \hspace{5mm} 2}},
(0,-5)*{}="U1";
(0,0)*{\circ}="o";
(-3,5)*{\circ}="D1";
(3,5)*{}="D2";
(-6,10)*{}="DD1";
(0,10)*{}="DD2";
\ar @{<-} "D1";"o" <0pt>
\ar @{<-} "D2";"o" <0pt>
\ar @{->} "U1";"o" <0pt>
\ar @{->} "D1";"DD1" <0pt>
\ar @{->} "D1";"DD2" <0pt>
 \endxy}
 \Ea
\ - \
\Ba{c}\resizebox{8mm}{!}{  \xy
(0,7)*{_{1 \hspace{5mm} \ }},
(3,12)*{_{2 \hspace{5mm} 3}},
(0,-5)*{}="U1";
(0,0)*{\circ}="o";
(3,5)*{\circ}="D1";
(-3,5)*{}="D2";
(6,10)*{}="DD1";
(0,10)*{}="DD2";
\ar @{<-} "D1";"o" <0pt>
\ar @{<-} "D2";"o" <0pt>
\ar @{->} "U1";"o" <0pt>
\ar @{->} "D1";"DD1" <0pt>
\ar @{->} "D1";"DD2" <0pt>
 \endxy}
 \Ea
=0, \ \ \ \ \
%%%%%%%%%%%%%%%%%%%%%%%%%%%%%%%%%%%%%%
%
\Ba{c}\resizebox{8mm}{!}{  \xy
(0,-7)*{_{\ \hspace{5mm} 3}},
(-3,-12)*{_{1 \hspace{5mm} 2}},
(0,5)*{}="U1";
(0,0)*{\circ}="o";
(-3,-5)*{\circ}="D1";
(3,-5)*{}="D2";
(-6,-10)*{}="DD1";
(0,-10)*{}="DD2";
\ar @{->} "D1";"o" <0pt>
\ar @{->} "D2";"o" <0pt>
\ar @{<-} "U1";"o" <0pt>
\ar @{<-} "D1";"DD1" <0pt>
\ar @{<-} "D1";"DD2" <0pt>
 \endxy}
 \Ea
\ - \
\Ba{c}\resizebox{8mm}{!}{  \xy
(0,-7)*{_{1 \hspace{5mm} \ }},
(3,-12)*{_{2 \hspace{5mm} 3}},
(0,5)*{}="U1";
(0,0)*{\circ}="o";
(3,-5)*{\circ}="D1";
(-3,-5)*{}="D2";
(6,-10)*{}="DD1";
(0,-10)*{}="DD2";
\ar @{->} "D1";"o" <0pt>
\ar @{->} "D2";"o" <0pt>
\ar @{<-} "U1";"o" <0pt>
\ar @{<-} "D1";"DD1" <0pt>
\ar @{<-} "D1";"DD2" <0pt>
 \endxy}
 \Ea
=0,\ \ \ \ \ \
%%%%%%%%%%%%%%%%%%%%%%%%%%%%%%%%%%%%%%
%
\Ba{c}\resizebox{6mm}{!}{  \xy
(0,9.5)*{_{1\hspace{5mm} 2}},
%(0,-2.5)*{}="UD";
%
(0,2.5)*{\circ}="0U";
(-3,7.5)*{}="U1";
(3,7.5)*{}="U2";
(0,-9.5)*{_{1\hspace{5mm} 2}},
%(0,2.5)*{}="U1";
%
(0,-2.5)*{\circ}="0D";
(-3,-7.5)*{}="D1";
(3,-7.5)*{}="D2";
\ar @{<-} "U1";"0U" <0pt>
\ar @{<-} "U2";"0U" <0pt>
\ar @{->} "0D";"0U" <0pt>
\ar @{->} "D1";"0D" <0pt>
\ar @{->} "D2";"0D" <0pt>
 \endxy}
 \Ea
\ - \
\Ba{c}\resizebox{13mm}{!}{  \xy
(0,17)*{^{1\hspace{9mm} 2}},
(0,-7)*{_{1\hspace{9mm} 2}},
(-5,-5)*{}="U1'";
(-5,0)*{\circ}="o'";
(-10,5)*{}="D1'";
(-2,5)*{}="D2'";
(5,-5)*{}="U1''";
(5,0)*{\circ}="o''";
(10,5)*{}="D1''";
(2,5)*{}="D2''";
(-5,10)*{\circ}="p'";
(5,10)*{\circ}="p''";
(-5,15)*{}="u'";
(5,15)*{}="u''";
\ar @{<-} "p''";"o'" <0pt>
\ar @{<-} "D1'";"o'" <0pt>
\ar @{->} "U1'";"o'" <0pt>
\ar @{<-} "p'";"o''" <0pt>
\ar @{<-} "D1''";"o''" <0pt>
\ar @{->} "U1''";"o''" <0pt>
\ar @{->} "D1''";"p''" <0pt>
\ar @{->} "D1'";"p'" <0pt>
\ar @{<-} "u''";"p''" <0pt>
\ar @{<-} "u'";"p'" <0pt>
 \endxy}
 \Ea
=0
\right.
$$
As we see, the relations are invariant under a reversing direction involution,
$$
\imath: \Ba{c}\resizebox{6mm}{!}{  \xy
(0,-7)*{_{1\hspace{5mm} 2}},
(0,5)*{}="U1";
(0,0)*{\circ}="o";
(-3,-5)*{}="D1";
(3,-5)*{}="D2";
\ar @{->} "D1";"o" <0pt>
\ar @{->} "D2";"o" <0pt>
\ar @{<-} "U1";"o" <0pt>
 \endxy}
 \Ea \lon \Ba{c}\resizebox{6mm}{!}{  \xy
(0,7)*{_{1\hspace{5mm} 2}},
(0,-5)*{}="U1";
(0,0)*{\circ}="o";
(-3,5)*{}="D1";
(3,5)*{}="D2";
\ar @{<-} "D1";"o" <0pt>
\ar @{<-} "D2";"o" <0pt>
\ar @{->} "U1";"o" <0pt>
 \endxy}
 \Ea, \
 \Ba{c}\resizebox{6mm}{!}{  \xy
(0,7)*{_{1\hspace{5mm} 2}},
(0,-5)*{}="U1";
(0,0)*{\circ}="o";
(-3,5)*{}="D1";
(3,5)*{}="D2";
\ar @{<-} "D1";"o" <0pt>
\ar @{<-} "D2";"o" <0pt>
\ar @{->} "U1";"o" <0pt>
 \endxy}
 \Ea
 \lon
 \Ba{c}\resizebox{6mm}{!}{  \xy
(0,-7)*{_{1\hspace{5mm} 2}},
(0,5)*{}="U1";
(0,0)*{\circ}="o";
(-3,-5)*{}="D1";
(3,-5)*{}="D2";
\ar @{->} "D1";"o" <0pt>
\ar @{->} "D2";"o" <0pt>
\ar @{<-} "U1";"o" <0pt>
 \endxy}
 \Ea
$$
in a full analogy to the prop $\LB$.
Moreover it was proven in op.cit. that the cohomology of the deformation complex \cite{MV} of any such a quantization morphism $\cQ$ can be identified with
the the cohomology of the symmetric tensor algebra of the oriented graph complex (up to one class controlling the rescaling automorphism of $\LB$, cf.\ \S 4),
$$
H^{\bu+1}\left(\Def(\Assb \stackrel{\cQ}{\rar} \caD\widehat{\LB})\right)=\odot^{\geq 1} \left(H^\bu(\OGC_3)\oplus \K\right)
$$
implying the isomorphism
$$
H^{1}\left(\Def(\Assb \stackrel{\cQ}{\rar} \caD\widehat{\LB})\right)= \grt,
$$
where $\grt$ is the Lie algebra of the full Grothendieck-Teichm\"uller group $GRT$
(not just its Lie subalgebra $\grt_1$). As every infinitesimal deformation of
any universal quantization map $\cQ$ can always be exponentiated to a genuine deformation of $\cQ$, this leads us to the identification of the homotopy classes of universal quantization with the set of Drinfeld associators.

\sip

Any universal quantization (\ref{1: involution of Lie and coLie}) is uniquely determined by its values
in the generators of $\Assb$. We call such a quantization morphism {\em direction involution invariant}\, if
$$
\cQ\left(
\Ba{c}\resizebox{6mm}{!}{  \xy
(0,-7)*{_{1\hspace{5mm} 2}},
(0,5)*{}="U1";
(0,0)*{\circ}="o";
(-3,-5)*{}="D1";
(3,-5)*{}="D2";
\ar @{->} "D1";"o" <0pt>
\ar @{->} "D2";"o" <0pt>
\ar @{<-} "U1";"o" <0pt>
 \endxy}
 \Ea\right)=\imath \circ \cQ\left(\Ba{c}\resizebox{6mm}{!}{  \xy
(0,7)*{_{1\hspace{5mm} 2}},
(0,-5)*{}="U1";
(0,0)*{\circ}="o";
(-3,5)*{}="D1";
(3,5)*{}="D2";
\ar @{<-} "D1";"o" <0pt>
\ar @{<-} "D2";"o" <0pt>
\ar @{->} "U1";"o" <0pt>
 \endxy}
 \Ea \right)\ \  \text{or, equivalently}, \ \
  \cQ\left(\Ba{c}\resizebox{6mm}{!}{  \xy
(0,7)*{_{1\hspace{5mm} 2}},
(0,-5)*{}="U1";
(0,0)*{\circ}="o";
(-3,5)*{}="D1";
(3,5)*{}="D2";
\ar @{<-} "D1";"o" <0pt>
\ar @{<-} "D2";"o" <0pt>
\ar @{->} "U1";"o" <0pt>
 \endxy}
 \Ea \right) =\imath \circ
 \cQ\left(
\Ba{c}\resizebox{6mm}{!}{  \xy
(0,-7)*{_{1\hspace{5mm} 2}},
(0,5)*{}="U1";
(0,0)*{\circ}="o";
(-3,-5)*{}="D1";
(3,-5)*{}="D2";
\ar @{->} "D1";"o" <0pt>
\ar @{->} "D2";"o" <0pt>
\ar @{<-} "U1";"o" <0pt>
 \endxy}
 \Ea\right),
$$
where $\imath$ the direction involution of $\LB$ discussed in \S 2.

\begin{thm}
{ The set of homotopy classes of direction involution invariant can be identified with the set of Drinfeld associators.}
\end{thm}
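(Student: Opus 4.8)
The plan is to run the same unobstructed deformation-theoretic argument that classifies \emph{all} universal quantizations in \cite{MW2}, but restricted to the eigenvalue $+1$ sector of the direction involution. First I would invoke \cite{KT} to fix one direction involution invariant quantization $\cQ_0\colon\Assb\to\caD\widehat{\LB}$. This base point is what makes the involution $\imath$, acting simultaneously on $\Assb$ and on $\caD\widehat{\LB}$, act on the associated deformation complex
$$
C:=\Def\left(\Assb \stackrel{\cQ_0}{\rar}\caD\widehat{\LB}\right)
$$
as a genuine involution (without a base point $\imath$ would only intertwine $\Def(\cQ)$ with $\Def(\imath\cQ)$). Since $\imath$ is induced from a symmetry of the whole prop structure, it is an automorphism of the dg Lie algebra $C$ squaring to the identity, so it splits $C=C^+\oplus C^-$ with $C^+$ a dg Lie subalgebra, and direction involution invariant quantizations together with their invariant deformations are precisely the Maurer--Cartan elements of $C^+$.

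The technical heart of the argument is to show that the quasi-isomorphism established in \cite{MW2},
$$
H^{\bu+1}(C)\;\cong\;\odot^{\geq 1}\!\left(H^\bu(\OGC_3)\oplus\K\right),
$$
is equivariant for the direction involutions on the two sides. Concretely, I would trace the explicit graphical quasi-isomorphism of \cite{MW2} through the polydifferential functor $\caD$ and the map $\cF$ of \eqref{4: Morhism F from OGC to}, and verify that reversing all edge directions on the quantization side corresponds, up to exactly the bookkeeping signs recorded in \eqref{3: involution on OG_d}, to the direction involution $\imath$ on $\OGC_3$, extended by the identity on the rescaling class $\K$. This is the step I expect to be the main obstacle: it is not a new conceptual difficulty but a careful orientation-and-sign computation, since one must confirm that the auxiliary data introduced by $\caD$ (the labelled white in- and out-vertices) are left untouched by arrow reversal and that no stray sign is produced when the involution is transported across the identification of the deformation cohomology with oriented graphs.

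Granting this equivariance, the cohomological conclusion is immediate. By Corollary~\ref{3: Corollary about OGC^-} the subcomplex $\OGC_3^-$ is acyclic, so (dually to Theorem~\ref{thm:main}) $\imath$ induces the identity on $H^\bu(\OGC_3)=H^\bu(\OGC_3^+)$, and it is plainly the identity on the one-dimensional class $\K$. Hence $\imath$ acts as the identity on the entire symmetric algebra $\odot^{\geq 1}(H^\bu(\OGC_3)\oplus\K)$, which forces
$$
H^\bu(C^-)=0,\qquad H^\bu(C^+)\cong H^\bu(C),\qquad H^1(C^+)=\grt .
$$
Thus the deformation theory of direction involution invariant quantizations is governed by exactly the same cohomology as the unrestricted theory, and in particular is unobstructed by the exponentiation statement of \cite{MW2}.

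Finally I would promote this to the level of homotopy classes. Because $C^-$ is acyclic, the inclusion $C^+\hookrightarrow C$ is a quasi-isomorphism of pronilpotent (genus-completed) dg Lie algebras, and by the homotopy invariance of the Maurer--Cartan moduli under such quasi-isomorphisms it induces a bijection between gauge-equivalence classes of Maurer--Cartan elements of $C^+$ and of $C$. The former are precisely the homotopy classes of direction involution invariant quantizations (surjectivity of the forgetful map encodes that every quantization is homotopic to an invariant one, injectivity that an invariant homotopy can always be chosen between invariant quantizations), while the latter are the homotopy classes of all universal quantizations, identified in \cite{MW2} with the set of Drinfeld associators. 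Composing the two identifications proves the theorem.
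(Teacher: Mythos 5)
Your proposal is correct and follows essentially the same route as the paper: fix the involution invariant base point $\cQ_0$ from \cite{KT}, split the deformation complex $\Def(\Assb \stackrel{\cQ_0}{\rar}\caD\widehat{\LB})$ into $\imath$-eigenparts, transport the involution along the (equivariant) quasi-isomorphism with $\odot^{\geq 1}(\OGC_3\oplus\K)$, and use the acyclicity of $\OGC_3^-$ together with exponentiation and the $GRT$-torsor structure on associators. Your Maurer--Cartan packaging of the final step is a slightly more explicit rendering of the paper's exponentiation argument (and conveniently yields the paper's subsequent corollary as surjectivity of the forgetful map), but it is not a genuinely different method.
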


\begin{proof}
 It was proven in \cite{KT} that the set of direction involution invariant universal quantizations is non-empty: there exists at least one such quantization (which the authors of\cite{KT} even suspected to be unique); let us denote it by $\cQ_0$ and study its deformation complex
$
\Def(\Assb \stackrel{\cQ_0}{\lon} \caD\widehat{\LB})
$ in the category of dg props.
There is an obvious morphism of deformation complexes
$$
\Def(\LB \stackrel{\Id}{\rar} {\LB}) \lon \Def(\Assb \stackrel{\cQ_0}{\rar} \caD\widehat{\LB})
$$
which is proven in \cite{MW2} to be a quasi-isomorphism for any universal quantization $\cQ$, in particular for $\cQ_0$. There is also a quasi-isomorphism of complexes
$$
\odot^{\geq 1} \left(\OGC_3\oplus \K\right) \lon \Def(\LB \stackrel{\Id}{\rar} {\LB})[1]
$$
constructed in \cite{MW1}. Composing these two we get a quasi-isomorphism of complexes.
$$
s: \odot^{\geq 1} \left(\OGC_3\oplus \K\right) \lon \Def(\Assb \stackrel{\cQ_0}{\rar} \caD\widehat{\LB})[1]
$$
As $\cQ_0$ is direction involution invariant, the differential in the latter complex commutes with the involution $\imath$ and hence decomposes into a direct sum of complexes,
$$
\Def(\Assb \stackrel{\cQ_0}{\rar} \caD\widehat{\LB})=\Def(\Assb \stackrel{\cF_0}{\rar} \caD\widehat{\LB})^+ [1] \oplus \Def(\Assb \stackrel{\cQ_0}{\rar} \caD\widehat{\LB})^-[1].
$$
The map $s$ respects the direction involution of both sides and hence induces quasi-isomorphisms
$$
s^+: \odot^{\geq 1} \left(\OGC_3\oplus \K\right)^+ \lon \Def(\Assb \stackrel{\cQ_0}{\rar} \caD\widehat{\LB})^+, \ \ \ \
s^-: \odot^{\geq 1} \left(\OGC_3\oplus \K\right)^-\lon \Def(\Assb \stackrel{\cQ_0}{\rar} \caD\widehat{\LB})^-.
$$
By Corollary {\ref{3: Corollary about OGC^-}} the complex $\OGC_3^-$ is acyclic implying
$$
H^1\left(\Def(\Assb \stackrel{\cQ_0}{\rar} \caD\widehat{\LB})^+\right)=\grt, \ \ \ H^\bu\left(\Def(\Assb \stackrel{\cQ_0}{\rar} \caD\widehat{\LB})^-\right)=0.
$$
Since every infinitesimal deformation corresponding to a class from $H^1\left(\Def(\Assb \stackrel{\cQ_0}{\rar} \caD\widehat{\LB})^+\right)$ exponentiates to a genuine involution invariant morphism
\cite{MW2}, and since the set of Drinfeld associators is the principal homogeneous space for $GRT$, the claim of the Theorem follows.
\end{proof}

\begin{cor}
%\subsection{Corollary}.
{ Any universal quantization $\cQ$ of Lie bialgebras as in (\ref{5: morphism F}) is homotopy equivalent
to a direction involution invariant one.}
\end{cor}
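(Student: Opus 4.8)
The plan is to deduce the Corollary formally from the preceding Theorem together with the classification of \emph{all} universal quantizations established in \cite{MW2}. Write $\cQ_0$ for the fixed direction involution invariant quantization supplied by \cite{KT}, and let $\Phi$ be the forgetful map sending the homotopy class of a direction involution invariant quantization to its homotopy class as an arbitrary universal quantization. The Corollary asserts precisely that $\Phi$ is surjective. By the preceding Theorem the source of $\Phi$ is identified with the set of Drinfeld associators, while by the main classification result of \cite{MW2} its target is likewise identified with that set. It therefore suffices to show that $\Phi$ intertwines these two identifications, for then $\Phi$ is a bijection and in particular surjective.

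First I would make both identifications explicit through the single base point $\cQ_0$. On the target side, \cite{MW2} identifies the tangent space $H^1\!\left(\Def(\Assb \stackrel{\cQ_0}{\rar}\caD\widehat{\LB})\right)=\grt$ and exponentiates these infinitesimal deformations; since the set of Drinfeld associators is a principal homogeneous space for $GRT$, the homotopy classes of all quantizations form a $GRT$-torsor based at $[\cQ_0]$. On the source side, the proof of the preceding Theorem decomposes the \emph{same} deformation complex under the involution $\imath$, and Corollary \ref{3: Corollary about OGC^-} forces the eigenvalue $-1$ part to be acyclic, so that
$$
H^1\!\left(\Def(\Assb \stackrel{\cQ_0}{\rar}\caD\widehat{\LB})^+\right)=\grt, \qquad H^\bu\!\left(\Def(\Assb \stackrel{\cQ_0}{\rar}\caD\widehat{\LB})^-\right)=0.
$$
Consequently the inclusion of the $\imath$-invariant deformation complex into the full one is itself a quasi-isomorphism, inducing the identity on $H^1=\grt$. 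Hence the $GRT$-action classifying invariant quantizations is the restriction of the one classifying all quantizations, and $\Phi$ is a $GRT$-equivariant map of $GRT$-torsors; any such map is automatically a bijection, which gives the desired surjectivity.

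The main obstacle is exactly this middle step: one must be sure that the two parametrizations by associators are compatible \emph{on the nose}, i.e.\ that $\Phi$ is the identity of the associator torsor rather than differing from it by some nontrivial automorphism, which would still yield a bijection but would obscure the equivariance argument. This compatibility is what the vanishing $H^\bu\!\left(\Def(\Assb \stackrel{\cQ_0}{\rar}\caD\widehat{\LB})^-\right)=0$ guarantees: because the entire deformation theory of $\cQ_0$ is concentrated in the $\imath$-invariant summand, no deformation can leave the invariant locus up to homotopy, so there is no obstruction to realizing an arbitrary associator by an invariant quantization. Once that acyclicity — furnished by Corollary \ref{3: Corollary about OGC^-} — is invoked, surjectivity of $\Phi$, and thus the Corollary, follows immediately.
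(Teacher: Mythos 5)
Your proposal is correct and follows essentially the same route as the paper, which states the Corollary without a separate proof precisely because it is immediate from the Theorem together with the classification of all universal quantizations in \cite{MW2}: both sets of homotopy classes are $GRT$-torsors based at the invariant quantization $\cQ_0$ of \cite{KT}, and the acyclicity of $\Def(\Assb \stackrel{\cQ_0}{\rar} \caD\widehat{\LB})^-$ makes the inclusion of the $\imath$-invariant deformation complex a quasi-isomorphism, so the forgetful map is an equivariant map of torsors and hence a bijection. Your explicit verification that the two associator parametrizations agree through the common base point $[\cQ_0]$ is exactly the reasoning the paper leaves tacit.
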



\begin{thebibliography}{10}

\bibitem[CV]{CV} J.\ Conant and K.\ Vogtmann, {\em On a theorem of Kontsevich}.
Alg.\ and Geom.\ Topology {\bf 3} (2003) 1167-1224.

\bibitem[Dr1]{Dr1}
V.\ Drinfeld,
{\em On some unsolved problems in quantum group theory}.
In: Lecture Notes in Math., Springer,  {\bf 1510} (1992), 1-8.


\bibitem[Dr2]{Dr2}
V.\ Drinfeld, {\em On quasitriangular quasi-Hopf algebras and a group closely connected
with $Gal(\bar{Q}/Q)$}, Leningrad Math. J. {\bf 2}, No.\ 4 (1991),  829-860.


\bibitem[EK]{EK} P.\ Etingof and D.\ Kazhdan.
\newblock {\em Quantization of Lie bialgebras, I}.
\newblock{ Selecta Math. (N.S.)} {\bf 2} (1996), 1-41.





\bibitem[KT]{KT}
C.\ Kassel and V.\ Turaev, {\em
Biquantization of Lie bialgebras}, Pacific J.\ Math. {\bf 195}(2) (2000), 297 -- 369.


%\bibitem[K1]{Kont1}
%Kontsevich, M.
%{\it Formal (non)commutative symplectic geometry}.
%The Gelfand Mathematical Seminars, 1990-1992, 173-187, Birkh\" auser Boston, Boston, MA, 1993.

%\bibitem[K2]{Kont2}
%Kontsevich, M.
%{\it Feynman diagrams and low-dimensional topology}.
%Progr. Math., 120:97-121, 1994.
%First European Congress of Mathematics, Vol.\ II, (Paris, 1992)

\bibitem[K]{Kont3}
M.\ Kontsevich,
{\it Formality conjecture}.
Deformation theory and symplectic geometry (Ascona, 1996), 139–156, Math. Phys. Stud., 20, Kluwer Acad. Publ., Dordrecht, 1997.

\bibitem[MMS]{MMS}  M.\ Markl, S.\ Merkulov and S.\ Shadrin, {\em Wheeled props and the master
equation}, preprint math.AG/0610683, J.\ Pure and Appl.\ Algebra {\bf 213} (2009), 496-535.

\bibitem[M]{Me} S.A.\ Merkulov,
 {\em Graph complexes with
  loops and wheels}. In:
  ``Algebra, Arithmetic
and Geometry - Manin Festschrift" (eds. Yu.\ Tschinkel and Yu.\ Zarhin),
Progress in Mathematics, Birkha\"user (2010) 311-354.

\bibitem[MV]{MV}
S.\ Merkulov and B.\ Vallette.
\newblock Deformation theory of representations of prop(erad)s I \& II.
\newblock \emph{Journal f\" ur die reine und angewandte Mathematik}.
\newblock (Qrelle) 634: 51--106 \& 636: 123--174, 2009.

%\bibitem[MW1]{MW}
%Merkulov, S.; Willwacher, T.
%{\it Props of ribbon graphs, involutive Lie bialgebras and moduli spaces of curves}.
%arXiv:1511.07808.

\bibitem[MW1]{MW1} S.\ Merkulov and T.\ Willwacher, {\em Deformation theory of  Lie bialgebra properads},   In: Geometry and Physics: A Festschrift in honour of Nigel Hitchin, Oxford University Press 2018, pp. 219-24S.



\bibitem[MW2]{MW2} S.\ Merkulov and T.\ Willwacher, {\em Classification of universal formality maps
for quantizations of Lie bialgebras}, Compositio Mathematica, {\bf 156} (2020) 2111-2148.




\bibitem[W1]{Wi1} T.\ Willwacher, {\em M.\ Kontsevich's graph complex and the Grothendieck-Teichmueller Lie algebra},
Invent. Math. 200 (2015) 671-760.




\bibitem[W2]{Wi2} T.\ Willwacher, {\em Oriented graph complexes},   Comm. Math. Phys. {\bf 334} (2015), no. 3, 1649-1666.





\bibitem[\v Z1]{MultiOriented}
\v Zivkovi\' c, M.
{\it Multi-directed graph complexes and quasi-isomorphisms between them I: oriented graphs}.
\newblock High. Struct. 4(1): 266–283, 2020.

\bibitem[\v Z2]{MultiSourced}
\v Zivkovi\' c, M.
{\it Multi-directed Graph Complexes and Quasi-isomorphisms Between Them II: Sourced Graphs}.
Int. Math. Res. Not. IMRN (2019), rnz212.



\end{thebibliography}
\end{document}